\numberwithin{equation}{section}
\newtheorem{theorem}[subsection]{Theorem}
\newtheorem{corollary}[subsection]{Corollary}
\newtheorem{lemma}[subsection]{Lemma}
\newtheorem{proposition}[subsection]{Proposition}
\theoremstyle{definition}
\newtheorem{example}[subsection]{Example}
\newcommand{\cA}{\mathcal{A}}
\newcommand{\cC}{\mathcal{C}}
\newcommand{\cD}{\mathcal{D}}
\newcommand{\cE}{\mathcal{E}}
\newcommand{\cI}{\mathcal{I}}
\newcommand{\sset}{\mathrm{sSet}}
\newcommand{\ssetI}{\mathrm{sSet}^{\cI}}
\DeclareMathOperator{\hocolim}{hocolim}
\newcommand{\ot}{\leftarrow}
\newcommand{\iso}{\cong}
\newcommand{\tensor}{\otimes}
\newcommand{\bld}[1]{{\mathbf{#1}}}
\newcommand{\concat}{\sqcup}
\DeclareMathOperator{\colim}{colim}
\newcommand{\arxivlink}[1]{\href{http://arxiv.org/abs/#1}{\texttt{arXiv:#1}}}
  \newcommand{\Bemerkung}[1]{{\marginpar{\hspace{0.2\marginparwidth}\rule{0.6\marginparwidth}{0.75mm}\hspace{0.2\marginparwidth}}\noindent\bfseries[#1]}}
  \newcommand{\Bemerkung}[1]{}
\title[Strictly commutative models for \texorpdfstring{$E_{\infty}$}{E-infinity} quasi-categories]{Strictly commutative models for\\ \texorpdfstring{$E_{\infty}$}{E-infinity} quasi-categories}
\author{Dimitar Kodjabachev}
\address{School of Mathematics and Statistics, 
University of Sheffield,
Hicks Building,
Sheffield, S3 7RH,
UK
}
\email{DKKodjabachev1@shef.ac.uk}
\author{Steffen Sagave} 
\address{Department of Mathematics and Informatics, 
Bergische Universit{\"a}t Wuppertal,
Gau{\ss}\-str. 20, 
42119 Wuppertal,
Germany} 
\email{sagave@math.uni-wuppertal.de}
\date{\today}
\begin{document}
\begin{abstract}
In this short note we show that $E_{\infty}$ quasi-categories can be replaced by strictly commutative objects in the larger category of diagrams of simplicial sets indexed by finite sets and injections. This complements earlier work on diagram spaces by Christian Schlichtkrull and the second author. 
\end{abstract}
\maketitle
\ifdraft{\linenumbers}{} 

\section{Introduction}
An $E_{\infty}$ space is a space with a multiplicative structure
encoded by the action of an $E_{\infty}$ operad, i.e., an operad
consisting of contractible spaces with a free $\Sigma_n$-action. It is
shown in joint work by Christian Schlichtkrull and the second
author~\cite{Sagave-S_diagram} that $E_{\infty}$ spaces can be
rigidified to strictly commutative objects if one passes to a larger
category of $\cI$-spaces: if $\cI$ denotes the category of finite sets
$\bld{n}=\{1,\dots, n\}$ and injective maps, then the functor category
$\ssetI$ has a symmetric monoidal convolution product, and the
category $\ssetI[\cC]$ of commutative monoid objects in $\ssetI$
admits a model structure making it Quillen equivalent to the category
of $E_{\infty}$~spaces.

The following construction, due to Mirjam Solberg~\cite[Section 4.14]{Schlichtkrull-S_braided}, shows that symmetric monoidal categories give rise to commutative monoid objects in $\ssetI$ in a natural way.
\begin{example}\label{ex:nerve-of-sym-mon}
Let $(\cA,\tensor)$ be a symmetric monoidal category. We consider the functor $\Phi(\cA)\colon \cI \to \mathrm{Cat}$ with objects of $\Phi(\cA)(\bld{n})$ the $n$-tuples $(a_1,\dots, a_n)$ of objects in $\cA$ and morphisms
\[ \Phi(\cA)(\bld{n})((a_1,\dots, a_n),(b_1,\dots, b_n)) = \cA(a_1\tensor\dots\tensor a_n, b_1\tensor\dots\tensor b_n).
\] Functoriality in $\cI$ is induced by permutation of entries and insertion of the unit object of $\cA$. Composing with the nerve functor $N$ gives an $\cI$-simplicial set $N\Phi(\cA)$, and the symmetric monoidal structure of $\cA$ makes $N\Phi(\cA)$ a commutative monoid object in $\ssetI$, see~\cite[Proposition 4.16]{Schlichtkrull-S_braided}.
\end{example}

Equipped with the (standard or Kan) model structure, the category of
simplicial sets $\sset$ is Quillen equivalent to the category of
topological spaces. Therefore, weak homotopy types of spaces are
represented by simplicial sets. But simplicial sets also model
quasi-categories up to Joyal equivalence: there is a finer Joyal model
structure on $\sset$ whose fibrant objects are the quasi-categories
and whose weak equivalences are called Joyal equivalences (see
e.g.~\cite{Lurie_HTT} or~\cite{Dugger-S_mapping} for published
references). Simplicial sets with $E_{\infty}$ structures are also
interesting from this perspective since they model symmetric monoidal
$(\infty,1)$-categories~\cite{Lurie_HA}. These play a prominent role
in Lurie's work on the cobordism hypothesis~\cite{Lurie_cobordism}.

In view of these two interpretations of simplicial sets, it is an
obvious question if the above comparison of $E_{\infty}$ objects in
$\sset$ and strictly commutative objects in
$\ssetI$ still holds if we regard simplicial sets as
models for quasi-categories.  The aim of this note is to prove that
this is indeed the case:
\begin{theorem}\label{thm:lifted-model-intro}
\begin{enumerate}[(i)]
\item The category $\ssetI[\cC]$ of commutative monoid objects in
$\ssetI$ admits a left proper \emph{positive $\cI$-model structure} where
a map $f$ is a weak equivalence if and only if $\hocolim_{\cI}f$ is a Joyal 
equivalence. 
\item If $\cD$ is an $E_{\infty}$ operad, then there
is a chain of Quillen equivalences relating
$\ssetI[\cC]$ and the category $\sset[\cD]$ of
$E_{\infty}$ simplicial sets with the model structure lifted from the
Joyal model structure.
\end{enumerate}
\end{theorem}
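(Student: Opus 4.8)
The plan is to redo the construction of the positive $\cI$-model structure from~\cite{Sagave-S_diagram}, feeding in the Joyal model structure on $\sset$ in place of the Kan model structure. The only properties of $(\sset,\text{Joyal})$ this uses are that it is a combinatorial, left proper, symmetric monoidal model category, and all three hold: it is combinatorial by a theorem of Joyal (cf.~\cite{Lurie_HTT}), it is left proper since every simplicial set is cofibrant, and it is Cartesian symmetric monoidal by a theorem of Joyal. We also record that the Kan model structure is the left Bousfield localization of the Joyal model structure obtained by inverting $\Delta^1\to\Delta^0$; hence a map between Kan complexes is a Joyal equivalence if and only if it is a weak homotopy equivalence, and in particular every weakly contractible Kan complex is Joyal equivalent to $\Delta^0$.

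For part~(i) I would first transfer the Joyal model structure along the free--forgetful adjunctions $\sset\rightleftarrows\ssetI$ to a positive level model structure on $\ssetI$ whose weak equivalences are the levelwise Joyal equivalences and whose cofibrations are generated by the maps $\cI(\bld{m},-)\times(\partial\Delta^k\cof\Delta^k)$ with $\bld{m}$ of positive cardinality; the cofibrant-generation argument is identical to the one in~\cite{Sagave-S_diagram}. Left Bousfield localizing at the maps inducing Joyal equivalences on $\hocolim_{\cI}$ --- the same localizing set as in the Kan case --- yields the \emph{positive $\cI$-model structure}, again combinatorial and left proper. Next one checks that it is a symmetric monoidal model category: the pushout--product axiom for the Day convolution follows formally from the Cartesian monoidal model structure on $(\sset,\text{Joyal})$ and the symmetric monoidal structure of $\cI$, and the unit axiom holds because the positive cofibrant replacement of the monoidal unit is computed exactly as before. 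Having also verified the monoid axiom and the commutative monoid axiom (see the last paragraph), one transfers the model structure to $\ssetI[\cC]$ by White's lifting theorem for commutative monoids (or by the results of Pavlov--Scholbach). Left properness of $\ssetI[\cC]$ then follows as in~\cite{Sagave-S_diagram}: a cofibrant commutative monoid forgets to a flat object of $\ssetI$, so pushouts along its structure maps stay homotopy pushouts over the left proper category $(\sset,\text{Joyal})$.

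For part~(ii) I would string together three Quillen equivalences. First, the map of operads $\cD\to\cC$ collapsing each $\cD(n)$ to a point is a weak equivalence of operads in $(\sset,\text{Joyal})$: $\cD(n)$ is a weakly contractible Kan complex (replace $\cD$ by a levelwise Kan operad if necessary) and is therefore Joyal equivalent to $\Delta^0$. Since the positive $\cI$-model structure is combinatorial, symmetric monoidal and satisfies the commutative monoid axiom, both $\cC$ and $\cD$ are admissible, and $\cD\to\cC$ induces a Quillen equivalence between the lifted positive model structures on $\ssetI[\cD]$ and $\ssetI[\cC]$. Second, the identity functor is a strong symmetric monoidal Quillen equivalence from the positive $\cI$-model structure to the $\cI$-model structure on $\ssetI$, and, since $\cD$ has free symmetric group actions, it passes to a Quillen equivalence on categories of $\cD$-algebras. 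Third, $\colim_{\cI}\colon\ssetI\rightleftarrows\sset\colon\mathrm{const}$ is a strong symmetric monoidal adjunction that is a Quillen equivalence for the Joyal model structures: its total left derived functor is $\hocolim_{\cI}$, which takes $\cI$-equivalences to Joyal equivalences, and on the essential image of $\mathrm{const}$ it is an equivalence because $B\cI$ is contractible, $\bld{0}$ being an initial object of $\cI$. This adjunction therefore induces a Quillen equivalence $\ssetI[\cD]\simeq\sset[\cD]$, and composing the three equivalences gives~(ii).

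The main obstacle is the commutative monoid axiom (and the monoid axiom) for the positive $\cI$-model structure built on the Joyal model structure: for a positive $\cI$-cofibration $f$ that is a Joyal $\cI$-equivalence, one must show that $f^{\Box n}/\Sigma_n$ is again a Joyal $\cI$-equivalence. The cofibration half of the axiom is literally the Kan statement, because the positive cofibrations do not depend on the chosen weak equivalences on $\sset$; what must be reconsidered is that taking $\Sigma_n$-orbits preserves Joyal $\cI$-equivalences, since --- unlike weak homotopy equivalences --- Joyal equivalences are not detected by homotopy groups and most of the objects occurring in the symmetric-power filtration are not fibrant. Positivity is what makes this work: it forces $\Sigma_n$ to act freely on the $\cI$-levels of the objects involved, so that their $\Sigma_n$-orbits are computed by a left Quillen functor out of the projective $\Sigma_n$-model structure, and the required equivalences reduce to $\Sigma_n$-equivariant statements that hold because the relevant maps are assembled from the combinatorics of $\cI$ in a way that is visibly a Joyal equivalence after applying $\hocolim_{\cI}$; alternatively one invokes the general admissibility results of Pavlov--Scholbach, which apply to any combinatorial, left proper, symmetric monoidal model category. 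Granting this, both parts of the theorem follow from the formal arguments above.
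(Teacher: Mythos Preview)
Your overall strategy matches the paper's: build the positive $\cI$-model structure by left Bousfield localization of a positive level structure, lift it to commutative monoids via the observation that positivity forces $\Sigma_n$ to act freely on the levels of $X^{\boxtimes n}$ so that orbits compute homotopy orbits, and then run the same three-step chain $\ssetI_{\mathrm{pos}}[\cC] \leftrightarrows \ssetI_{\mathrm{pos}}[\cD] \leftrightarrows \ssetI_{\mathrm{abs}}[\cD] \leftrightarrows \sset[\cD]$. Your last paragraph correctly isolates the crux, and it is exactly what the paper proves as Lemmas~\ref{lem:quotient-by-free} and~\ref{lem:free-on-pos}.

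Where you are too optimistic is the opening assertion that ``the only properties of $(\sset,\text{Joyal})$ this uses are that it is combinatorial, left proper, symmetric monoidal,'' and the claim that the pushout--product axiom for the $\cI$-model structure ``follows formally.'' The paper explicitly warns that several arguments from \cite{Sagave-S_diagram} do \emph{not} transfer verbatim. Two concrete issues: (a) the Joyal model structure is not simplicial, so the Bousfield--Kan formula is not homotopy invariant and $\hocolim_{\cI}$ must be taken in Hirschhorn's framing sense; the paper therefore reproves the monoidality of the $\cI$-model structure via a direct argument (Proposition~\ref{prop:cofibrant-boxtimes-preserves}) exploiting that the comma categories $\bld{k}\concat - \downarrow \bld{m}$ have componentwise terminal objects, rather than citing the Kan-model proof; (b) there is no explicit set of generating acyclic cofibrations for the Joyal structure, so the free-$\Sigma_n$-action check in the lifting step cannot be done on explicit generators as in \cite{Sagave-S_diagram}; the paper invokes Barwick's result to first arrange that the generating acyclic cofibrations have (positive) cofibrant domains and codomains, and only then applies Lemma~\ref{lem:free-on-pos}. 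Invoking Pavlov--Scholbach or White is a legitimate packaging of the lifting step, but their hypotheses (symmetric flatness, the commutative monoid axiom) still require exactly these combinatorial inputs, so the citations do not let you bypass the work.
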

Here an $E_{\infty}$ operad is an operad $\cD$ in simplicial sets such that $\cD(n)$ has a free $\Sigma_n$-action and $\cD(n)$ is contractible with respect to the Joyal model structure. Every $E_{\infty}$ operad in this sense is an $E_{\infty}$ operad in the classical sense since being contractible with respect to the Joyal model structure implies being contractible with respect to the Kan model structure. Moreover, every operad $\cD$ with $\cD(n)$ a $\Sigma_n$-free Kan complex such that $\cD(n)\to *$ is a weak homotopy equivalence is an $E_{\infty}$-operad in the sense of the theorem, for example the Barratt--Eccles operad whose $n$-th space is $E\Sigma_n$. 

By~\cite[Lemma 4.15]{Schlichtkrull-S_braided}, the object $N\Phi(\cA)$ in $\ssetI[\cC]$ considered in Example~\ref{ex:nerve-of-sym-mon} is fibrant in the model structure of Theorem~\ref{thm:lifted-model-intro}(i). It models the $E_{\infty}$ quasi-category $N\cA$ associated with the symmetric monoidal category $\cA$. Therefore Example~\ref{ex:nerve-of-sym-mon} shows that the nerve of a symmetric monoidal category can be rigidified to a commutative monoid object in $\ssetI$ in a natural way.

More generally, it follows from Theorem~\ref{thm:lifted-model-intro} that for any $E_{\infty}$ simplicial set $X$ in the sense of the theorem, there is an $A\in \ssetI[\cC]$ and a chain of maps ${A \ot B \to \mathrm{const}_{\cI}X}$ of $E_{\infty}$ objects in $\ssetI$ that induces a chain of Joyal equivalences when applying $\hocolim_{\cI}$ (compare~\cite[Corollary 3.7]{Sagave-S_diagram}). Hence the $E_{\infty}$ object $X$ can be replaced by the strictly commutative object $A$.  Although this rigidification of a structure up to homotopy by a strict one is in contrast to the philosophy of quasi-categories, we think that it is valuable to observe that $E_{\infty}$ quasi-categories can be expressed this way: when viewing simplicial sets as models for spaces, it is often easy to write down explicit objects in $\ssetI[\cC]$ that model $E_{\infty}$ spaces. This applies for example to $Q(X)$ if $X$ is connected~\cite[Example 1.3]{Sagave-S_diagram} or to $B\mathrm{GL}_{\infty}(R)^+$\cite[Remark 2.2]{Schlichtkrull_units}. It is likely that besides Example~\ref{ex:nerve-of-sym-mon} above,  there are more instances where interesting $E_{\infty}$ quasi-categories arise from commutative $\cI$-functors.

Theorem~\ref{thm:lifted-model-intro} and the corresponding statement about weak homotopy types of $E_{\infty}$ spaces~\cite[Theorem 1.2]{Sagave-S_diagram} refer to different model structures on the same categories that have the same cofibrations. Nonetheless, several arguments from~\cite{Sagave-S_diagram} do not apply here since~\cite[Theorem 1.2]{Sagave-S_diagram} was derived from a result about diagram spaces indexed by more general categories than $\cI$, and some of the more general arguments were based on special features of the Kan model structure. However, the Joyal model structure differs from the Kan model structure since it fails to be right proper and simplicial, and because it doesn't have an explicit set of generating acyclic cofibrations. In the proof of Theorem~\ref{thm:lifted-model-intro} presented here, we put emphasis on the points where new arguments are required and simply cite those parts of the proof of~\cite[Theorem 1.2]{Sagave-S_diagram} that also apply here. 

This note is a condensed and revised version of the first author's master's thesis at the University of Bonn, supervised by the second author. We thank an anonymous referee for a quick and helpful report on an earlier version of this note.
\section{Model structures on \texorpdfstring{$\cI$}{I}-simplicial sets}
The category of simplicial sets $\sset$ admits a \emph{Joyal model structure} with cofibrations the monomorphisms and fibrant objects the quasi-categories, i.e., the \emph{weak} or \emph{inner} Kan complexes. See~\cite[Theorem 2.2.5.1]{Lurie_HTT} or~\cite[Theorem 2.13]{Dugger-S_mapping}. The Joyal model structure is cofibrantly generated with generating cofibrations $I = \{\partial \Delta^n \to \Delta^n\,|\, n\geq 0\}$. We let $J$ be a set of generating acyclic cofibrations. (There is no known explicit description of such a set $J$.)

Let $\cI$ be the category with objects the finite sets $\bld{n}=\{1,\dots, n\}$ for $n\geq 0$ and morphisms the injections. Concatenation of ordered sets $\concat$ makes $\cI$ a symmetric monoidal category with unit $\bld{0}$ and symmetry isomorphism the obvious shuffle map.

Let $\ssetI$ be the functor category of $\cI$-diagrams of simplicial sets. For every object $\bld{n}$ of $\cI$, there is a free/forgetful adjunction $F^{\cI}_{\bld{n}}\colon \sset \rightleftarrows \ssetI\colon \mathrm{Ev}_{\bld{n}}$ with $F^{\cI}_{\bld{n}}(K) = \cI(\bld{n},-)\times K$ and $\mathrm{Ev}_{\bld{n}}(X) = X(\bld{n})$. For $X$ and $Y$ in $\ssetI$, the left Kan extension of the $\cI\times\cI$-diagram $X(-)\times Y(-)$ along $\concat \colon \cI\times \cI \to \cI$ defines an object $X\boxtimes Y$ in $\ssetI$. This construction defines a symmetric monoidal product $\boxtimes \colon \ssetI\times \ssetI\to \ssetI$ with unit $F^{\cI}_{\bld{0}}(*)$.

We now start to consider model structures on $\ssetI$.  Let $\cI_+$ be the full subcategory of $\cI$ on the objects $\bld{n}$ with $|\bld{n}|\geq 1$. We say that a map $f\colon X \to Y$ in $\ssetI$ is an absolute (resp. positive) level equivalence if $f\colon X(\bld{n})\to Y(\bld{n})$ is a Joyal equivalence for all $\bld{n}$ in $\cI$ (resp. all $\bld{n}$ in $\cI_+$), and an absolute (resp. positive) level fibration if $f\colon X(\bld{n})\to Y(\bld{n})$ is a fibration in the Joyal model structure for all $\bld{n}$ in $\cI$ (resp. all $\bld{n}$ in $\cI_+$). A map is an absolute (resp. positive) level cofibration if it has the left lifting property with respect to any map that is both an absolute (resp. positive) level fibration and level equivalence.
\begin{lemma}\label{lem:lev-model}
  These classes of maps define two cofibrantly generated left proper model structures on $\ssetI$, called the \emph{absolute} and the \emph{positive} level model structures.
\end{lemma}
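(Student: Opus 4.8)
The plan is to obtain both model structures by transfer of the Joyal model structure along the evaluation functors, and then to verify left properness by hand. Recall that the Joyal model structure on $\sset$ is cofibrantly generated with generating cofibrations $I$ and some set $J$ of generating acyclic cofibrations, and that it is left proper because its cofibrations are the monomorphisms, so every simplicial set is cofibrant and a model category in which all objects are cofibrant is left proper. I would take the absolute level model structure on $\ssetI$ to be generated by the sets $\{F^{\cI}_{\bld{n}}(i)\mid i\in I,\ \bld{n}\in\cI\}$ of generating cofibrations and $\{F^{\cI}_{\bld{n}}(j)\mid j\in J,\ \bld{n}\in\cI\}$ of generating acyclic cofibrations, and the positive level model structure to be generated by the same sets with $\bld{n}$ restricted to $\cI_+$. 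Equivalently, each of the two structures is right-induced along the adjunction whose right adjoint sends $X$ to the family $(X(\bld{n}))_{\bld{n}}$, with target the product of copies of $\sset$ (with its Joyal model structure) indexed by $\cI$, respectively $\cI_+$.

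For the existence of these structures I would invoke the standard recognition theorem for transferred (equivalently, here, projective-type) cofibrantly generated model structures. The category $\ssetI$ is complete and cocomplete, with (co)limits formed objectwise, and the domains and codomains of the generating maps are small, so the small object argument applies. The only point that needs an argument is that every relative $\{F^{\cI}_{\bld{n}}(j)\}$-cell complex is a (positive) level equivalence; this holds because each $F^{\cI}_{\bld{n}}(j)=\cI(\bld{n},-)\times j$ is, at every object of $\cI$, a coproduct of copies of the Joyal acyclic cofibration $j$ and hence an objectwise acyclic cofibration, and objectwise acyclic cofibrations are closed under pushout and transfinite composition since colimits in $\ssetI$ are computed objectwise. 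Cofibrant generation and the identification of the weak equivalences and fibrations with the (positive) level equivalences and fibrations then follow from the recognition theorem, as in the Kan case treated in~\cite{Sagave-S_diagram}.

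It remains to establish left properness. I first claim that every (positive) level cofibration $f$ is a monomorphism at each object $\bld{n}$. The functor $\mathrm{Ev}_{\bld{n}}$ also has a right adjoint $R_{\bld{n}}$ given by $R_{\bld{n}}(K)(\bld{m})=\prod_{\cI(\bld{m},\bld{n})}K$; since each $\cI(\bld{m},\bld{n})$ is finite, $R_{\bld{n}}$ carries a Joyal acyclic fibration $g$ to a map which at every object is a finite product of copies of $g$ (the empty product when $|\bld{m}|>|\bld{n}|$), hence a Joyal acyclic fibration at every object, and so in particular both an absolute and a positive level acyclic fibration. Thus $f$ has the left lifting property against $R_{\bld{n}}(g)$ for every Joyal acyclic fibration $g$, and by adjunction $f(\bld{n})=\mathrm{Ev}_{\bld{n}}(f)$ has the left lifting property against every Joyal acyclic fibration, i.e.\ is a monomorphism. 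Now a pushout in $\ssetI$ of a (positive) level equivalence along a (positive) level cofibration is, at each relevant object $\bld{n}$, the pushout of a Joyal equivalence along a monomorphism, hence a Joyal equivalence by left properness of the Joyal model structure; so the pushout is again a (positive) level equivalence.

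The one genuinely Joyal-specific point is that there is no explicit description of $J$, but the construction above uses only its existence and so runs just as in the Kan case; the failure of the Joyal model structure to be right proper or simplicial plays no role here and is the reason only left properness is claimed. I expect the transfer step — more precisely the verification that $\{F^{\cI}_{\bld{n}}(j)\}$-cell complexes are level equivalences, together with the bookkeeping around the two variants — to be the only real work, with no conceptual obstacle.
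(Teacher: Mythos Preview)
Your proposal is correct and follows essentially the same approach as the paper: the paper simply cites Hirschhorn's Theorem~11.6.1 for the absolute (projective) case and the analogous argument from \cite[Proposition~6.7]{Sagave-S_diagram} for the positive case, which amount precisely to the transfer argument you spell out, with left properness inherited objectwise from the Joyal model structure. Your explicit verification that (positive) level cofibrations are objectwise monomorphisms via the right adjoint $R_{\bld{n}}$ to evaluation is a clean unpacking of what those references leave implicit.
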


\begin{proof}[Proof of Lemma~\ref{lem:lev-model}]
  The absolute case follows from~\cite[Theorem 11.6.1]{Hirschhorn_model}, and the
  positive case works as in~\cite[Proposition 6.7]{Sagave-S
    _diagram}. The sets
\begin{equation}\label{eq:gen-cof}
  I^{\mathrm{level}}_{\mathrm{abs}}= \{F^{\cI}_{\bld{n}}(i)\,|\, i\in
  I, \bld{n}\in \cI\}\text{ and }  I^{\mathrm{level}}_{\mathrm{pos}}=
  \{F^{\cI}_{\bld{n}}(i)\,|\, i\in I, \bld{n}\in \cI_+\}
\end{equation}
provide the generating cofibrations. The generating acyclic cofibrations $J^{\mathrm{level}}_{\mathrm{abs}}$ and $J^{\mathrm{level}}_{\mathrm{pos}}$ are defined similarly with $J$ in place of $I$.
\end{proof}
Since the Joyal model structure fails to be simplicial, the usual
Bousfield-Kan formula does not provide a homotopy invariant homotopy
colimit functor. In the following, $\hocolim_{\cI}\colon
\ssetI \to \sset$ denotes the functor constructed
in~\cite[\S 19]{Hirschhorn_model} using cosimplicial frames. We
recall from~\cite[Example 19.2.10]{Hirschhorn_model} that there is a
natural map $\hocolim_{\cI} X \to \colim_{\cI} X$.
\begin{lemma}\label{lem:hocolim-colim}
  If $X$ is absolute or positive level cofibrant in $\ssetI$, then the map $\hocolim_{\cI}
  X \to \colim_{\cI} X$ is a Joyal equivalence.
\end{lemma}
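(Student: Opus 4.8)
The plan is to reduce the statement to the absolute case and then feed it into the general theory of homotopy colimits constructed from cosimplicial frames in \cite[\S 19]{Hirschhorn_model}. For the reduction, note that a map which is simultaneously an absolute level fibration and an absolute level equivalence is, a fortiori, both a positive level fibration and a positive level equivalence, since these latter conditions are only tested on the smaller class of objects $\bld{n}\in\cI_+$. Hence a positive level cofibration --- which by definition has the left lifting property against all maps that are at the same time positive level fibrations and positive level equivalences --- in particular has the left lifting property against every absolute level acyclic fibration, and is therefore an absolute level cofibration; applying this to $\emptyset\to X$ shows that a positive level cofibrant object is in particular absolute level cofibrant. (Equivalently, $I^{\mathrm{level}}_{\mathrm{pos}}\subseteq I^{\mathrm{level}}_{\mathrm{abs}}$ in the notation of Lemma~\ref{lem:lev-model}.) So it is enough to treat the case where $X$ is absolute level cofibrant.

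The crucial observation is then that the absolute level model structure is exactly the projective model structure on $\ssetI$ associated with the Joyal model structure on $\sset$: its fibrations and acyclic fibrations are the maps that are levelwise Joyal fibrations, respectively acyclic fibrations, and this is the model structure produced by \cite[Theorem 11.6.1]{Hirschhorn_model} that was used in the proof of Lemma~\ref{lem:lev-model}. Hirschhorn's frame construction of $\hocolim_{\cI}$ together with the natural map $\hocolim_{\cI}X\to\colim_{\cI}X$ of \cite[Example 19.2.10]{Hirschhorn_model} is set up precisely so that this map is a weak equivalence whenever $X$ is cofibrant in this projective structure, see \cite[\S 19]{Hirschhorn_model}; since the weak equivalences of $\sset$ here are the Joyal equivalences, this is the assertion of the lemma. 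One may repackage this as follows: $(\colim_{\cI},\mathrm{const}_{\cI})$ is a Quillen adjunction for the absolute level model structure --- the right adjoint $\mathrm{const}_{\cI}$ sends Joyal fibrations and acyclic fibrations to levelwise, hence absolute level, ones --- and $\hocolim_{\cI}$ is a model for the total left derived functor of $\colim_{\cI}$, which agrees with $\colim_{\cI}$ up to canonical weak equivalence on cofibrant objects.

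The one point that genuinely requires attention, the rest being formal, is to confirm that the results of \cite{Hirschhorn_model} used here rest on nothing about $\sset$ beyond its being a cofibrantly generated model category (and, for the left properness asserted in Lemma~\ref{lem:lev-model}, a left proper one); in particular they do not use that $\sset$ is simplicial or right proper. Since the Joyal model structure is cofibrantly generated --- with generating cofibrations $I$ and some set $J$ of generating acyclic cofibrations --- and left proper, this machinery applies without change. It is also the reason one should not attempt to shortcut the proof by an induction over a free-cell presentation of $X$ that computes $\hocolim_{\cI}$ via the Bousfield--Kan formula: over the Joyal model structure that formula fails to be homotopy invariant --- for example $N(\cI_{\bld{n}/})$ is not Joyal equivalent to a point although the undercategory $\cI_{\bld{n}/}$ has an initial object --- so such an induction would have to be run with the frame homotopy colimit and would end up reproving Hirschhorn's comparison anyway.
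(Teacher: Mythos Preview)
Your proof is correct and follows essentially the same approach as the paper. The paper's own proof is a one-sentence reference to \cite[Theorem~19.9.1]{Hirschhorn_model}, with the remark that the absolute level model structure plays the role of the Reedy model structure there; you spell out the same idea in more detail, including the reduction from positive to absolute cofibrancy via $I^{\mathrm{level}}_{\mathrm{pos}}\subseteq I^{\mathrm{level}}_{\mathrm{abs}}$, the identification of the absolute level structure with the projective one, and the derived-functor repackaging via the Quillen adjunction $(\colim_{\cI},\mathrm{const}_{\cI})$.
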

\begin{proof}
  This is analogous to~\cite[Theorem 19.9.1]{Hirschhorn_model}, with
  the absolute level model structure replacing the Reedy model
  structure in that reference.
\end{proof}
We say that a map $f\colon X \to Y$ in $\ssetI$ is an
$\cI$-equivalence if $\hocolim_{\cI}f$ is a Joyal equivalence of
simplicial sets, and an absolute (resp. positive) $\cI$-cofibration if it
is an absolute (resp. positive) level cofibration. A map is an absolute
(resp. positive) $\cI$-fibration if it has the right lifting property
with respect to any map that is both an absolute (resp. positive)
$\cI$-cofibration and an $\cI$-equivalence.
\begin{proposition}\label{lem:I-model}
  These classes of maps define two cofibrantly
  generated left proper model structures on $\ssetI$, called the \emph{absolute}
  and the \emph{positive} $\cI$-model structures.
\end{proposition}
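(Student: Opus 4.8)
The plan is to realize the two $\cI$-model structures as left Bousfield localizations of the corresponding level model structures of Lemma~\ref{lem:lev-model}. To this end I would fix, in the absolute case, the set $S$ of all maps $F^{\cI}_{\bld{n}}(\Delta^{\ell})\to F^{\cI}_{\bld{m}}(\Delta^{\ell})$ induced by a morphism $\bld{m}\to\bld{n}$ in $\cI$ and an integer $\ell\geq 0$ (and likewise with $\cI$ replaced by $\cI_+$ in the positive case). Since the Joyal model structure on $\sset$ is left proper and cellular, so is the level model structure on $\ssetI$, and hence the left Bousfield localization $L_S\ssetI$ exists, is left proper and cofibrantly generated, and has the same cofibrations as the level model structure (see~\cite{Hirschhorn_model}). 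It then remains to identify the $S$-local equivalences with the $\cI$-equivalences; left properness and cofibrant generation of Proposition~\ref{lem:I-model} are immediate from this identification.

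For the inclusion ``$S$-local equivalence $\Rightarrow$ $\cI$-equivalence'' I would argue as follows. The adjunction $\colim_{\cI}\colon\ssetI\rightleftarrows\sset\colon\mathrm{const}_{\cI}$ is a Quillen adjunction for the level model structure because $\mathrm{const}_{\cI}$ preserves level fibrations and level acyclic fibrations. Moreover $\colim_{\cI}F^{\cI}_{\bld{n}}(K)\iso K$ because $\id_{\bld{n}}$ is initial in the comma category $\bld{n}\downarrow\cI$, so $\colim_{\cI}$ carries each map of $S$ to an identity map, in particular to a Joyal equivalence between cofibrant objects. By the universal property of the localization, $\colim_{\cI}$ therefore remains left Quillen as a functor $L_S\ssetI\to\sset_{\mathrm{Joyal}}$. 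Its total left derived functor is computed by applying $\colim_{\cI}$ to a level cofibrant replacement, which by Lemma~\ref{lem:hocolim-colim} agrees with the homotopy-invariant functor $\hocolim_{\cI}$. Since a left Quillen functor sends weak equivalences between cofibrant objects to weak equivalences, $\hocolim_{\cI}$ sends $S$-local equivalences to Joyal equivalences, i.e.\ $S$-local equivalences are $\cI$-equivalences.

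For the converse I would use the standard reduction via fibrant replacement: a functorial $L_S$-fibrant replacement turns an $\cI$-equivalence $f\colon X\to Y$ into a map $\hat f$ between $S$-local level-fibrant objects, and the replacement maps $X\to\hat X$, $Y\to\hat Y$ are $S$-local equivalences, hence $\cI$-equivalences by the previous paragraph, so $\hat f$ is again an $\cI$-equivalence by two-out-of-three; if $\hat f$ is then a level equivalence we are done, since level equivalences are $L_S$-equivalences and two-out-of-three applies once more. The key claim is thus: \emph{an $\cI$-equivalence between $S$-local level-fibrant objects is a level equivalence.} To prove it one first checks that, by the choice of $S$, an $S$-local level-fibrant $X$ has all structure maps $X(\bld{m})\to X(\bld{n})$ Joyal equivalences; this rests on the fact that the homotopy function complex $\mathrm{map}(F^{\cI}_{\bld{n}}(\Delta^{\ell}),X)$ in the level model structure computes $\mathrm{map}_{\mathrm{Joyal}}(\Delta^{\ell},X(\bld{n}))$, together with the observation that a map of quasi-categories is a Joyal equivalence if and only if it induces equivalences on these complexes for all $\ell\geq 0$. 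One then shows that a homotopy-constant level-fibrant $\cI$-diagram $X$ satisfies $\hocolim_{\cI}X\simeq X(\bld{n})$ naturally: here one replaces $X$ up to level equivalence by a constant diagram, uses that $N(\cI)$ and $N(\cI_+)$ are Joyal-contractible, that $\cI_+\hookrightarrow\cI$ is homotopy cofinal, and that $-\times K$ is a left Quillen endofunctor of $\sset_{\mathrm{Joyal}}$, so that $N(\cI)\times K\to K$ is a Joyal equivalence. Given this, $\hocolim_{\cI}\hat f$ being a Joyal equivalence forces each $\hat f(\bld{n})$ to be one, proving the key claim.

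The main obstacle is precisely this key claim. The two ingredients that require genuinely new input, relative to~\cite{Sagave-S_diagram}, are the control of homotopy function complexes in the non-simplicial Joyal model structure --- this is what forces the use of the frame-based $\hocolim_{\cI}$ of~\cite{Hirschhorn_model} in place of the Bousfield--Kan formula, and what ensures that ``$S$-local'' really captures homotopy-constancy in the Joyal sense --- and the fact that $-\times K$ preserves Joyal equivalences, needed to compute $\hocolim_{\cI}$ of a constant diagram. Everything else --- the existence and formal properties of the localization, the reductions via two-out-of-three and fibrant replacement, and left properness --- follows the pattern of~\cite{Sagave-S_diagram} together with the localization machinery of~\cite{Hirschhorn_model}.
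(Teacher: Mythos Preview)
Your overall strategy---realize both model structures as left Bousfield localizations of the level model structures and then identify the local equivalences with the $\cI$-equivalences---is the same as the paper's.  The paper, however, shortcuts the absolute case entirely by citing Dugger's theorem \cite[Theorem~5.2]{Dugger_replacing}, which directly produces a model structure on $\cM^{C}$ whose weak equivalences are the $\hocolim_C$-equivalences whenever $N(C)$ is (Kan-)contractible; it then handles the positive case by localizing at the smaller set $T=\{F_{\bld n}^{\cI}(*)\to F_{\bld m}^{\cI}(*)\mid \alpha\colon\bld m\to\bld n\in\cI_+\}$, using the inclusion $T\subset S$ for one direction and homotopy cofinality of $\cI_+\hookrightarrow\cI$ together with Dugger's theorem applied to $\sset^{\cI_+}$ for the other.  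So you are essentially reproving the relevant instance of Dugger's result by hand, which is a legitimate and more self-contained route.

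There is, however, a genuine error in your ``key claim'' step.  You assert that $N(\cI)$ and $N(\cI_+)$ are Joyal-contractible and deduce that $N(\cI)\times K\to K$ is a Joyal equivalence.  This is false: a Joyal equivalence between nerves of categories is an equivalence of categories, and neither $\cI$ nor $\cI_+$ is equivalent to the terminal category (already $\Delta^1$ is not Joyal-contractible).  Relatedly, the formula $\hocolim_{\cI}\mathrm{const}_K\simeq N(\cI)\times K$ is the Bousfield--Kan expression, which you yourself note is not available here; in the Joyal model structure the naive cosimplicial object $[n]\mapsto K\times\Delta^n$ is \emph{not} a frame on $K$, since $K\times\Delta^n\to K$ is not a Joyal equivalence.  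The conclusion $\hocolim_{\cI}\mathrm{const}_K\simeq K$ you want is nevertheless correct, but for a different reason: for any model category $\cM$ and cosimplicial frame $\tilde K$ on a cofibrant object $K$, the functor $L\mapsto L\otimes\tilde K$ takes \emph{Kan} weak equivalences of simplicial sets to weak equivalences in $\cM$ (this is the content of Hirschhorn's framing machinery), and $N(\cI)$ is Kan-contractible because $\cI$ has an initial object.  With this correction your argument goes through; but as written the justification is wrong.
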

We write $\ssetI_{\mathrm{abs}}$ and $\ssetI_{\mathrm{pos}}$ for these model categories. These (Joyal) $\cI$-model structures have the same cofibrations as the corresponding (Kan) $\cI$-model structures constructed in \cite[Proposition 6.16]{Sagave-S_diagram} by a different technique. 
\begin{proof}
  Since $\cI$ has an initial object, its classifying space is contractible. Hence the existence of the absolute $\cI$-model structure follows from~\cite[Theorem~5.2]{Dugger_replacing}, and we recall from~\cite{Dugger_replacing} that it is constructed as the left Bousfield localization of the absolute level model structure at $S = \{\alpha^* \colon F_{\bld{n}}^{\cI}(*) \to F_{\bld{m}}^{\cI}(*)\, | \, \alpha \colon \bld{m}\to\bld{n} \in \cI\}$.

  The positive $\cI$-model structure is defined to be the left Bousfield localization of the positive level model structure with respect to \[T = \{\alpha^* \colon F_{\bld{n}}^{\cI}(*) \to F_{\bld{m}}^{\cI}(*)\, | \, \alpha \colon \bld{m}\to\bld{n} \in \cI_+\}.\] It exists and is left proper by~\cite[Theorem 4.1.1]{Hirschhorn_model}. Hence it remains to show that its weak equivalences, the \emph{$T$-local equivalences}, are the $\cI$-equivalences. Since $T\subset S$, every $T$-local equivalence is an $\cI$-equivalence. Let $f$ be an $\cI$-equivalence. Passing to fibrant replacements, we may assume that $f$ is a map of $T$-local objects.  Restricting $f$ along the inclusion $\cI_+ \to \cI$ and applying~\cite[Theorem~5.2]{Dugger_replacing} to $\sset^{\cI_+}$, it follows that $\hocolim_{\cI_+}f$ is a Joyal equivalence. Since $\cI_+ \to \cI$ is homotopy cofinal \cite[Proof of Corollary 5.9]{Sagave-S_diagram}, this implies the claim.
\end{proof}
\begin{corollary}\label{cor:colim-Q-equiv} 
  There is a chain of Quillen equivalences \[\xymatrix@1{\ssetI_{\mathrm{pos}} \ar@<.15pc>[r]^{\mathrm{id}}& \ssetI_{\mathrm{abs}} \ar@<.15pc>[l]^{\mathrm{id}}\ar@<.15pc>[r]^-{\colim_{\cI}} & \sset \ar@<.15pc>[l]^-{\mathrm{const}_{\cI}}}\] relating $\ssetI$ equipped with the positive and absolute $\cI$-model structures and $\sset$ equipped with the Joyal model structure.
\end{corollary}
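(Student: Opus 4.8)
The plan is to handle the two adjunctions in the chain one at a time. The identity pair $\ssetI_{\mathrm{pos}}\rightleftarrows\ssetI_{\mathrm{abs}}$ is formal: every positive level cofibration is an absolute level cofibration, because the absolute level acyclic fibrations sit inside the positive level acyclic fibrations (the condition at $\bld{0}$ is simply dropped), so the class of maps one must lift against only grows when passing from the absolute to the positive setting. Hence $\id\colon\ssetI_{\mathrm{pos}}\to\ssetI_{\mathrm{abs}}$ preserves cofibrations, and it preserves acyclic cofibrations because by Proposition~\ref{lem:I-model} both $\cI$-model structures have the same weak equivalences, the $\cI$-equivalences. A Quillen pair between two model structures on the same underlying category whose classes of weak equivalences coincide is automatically a Quillen equivalence, so this settles the first adjunction.

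For the pair $\colim_{\cI}\colon\ssetI_{\mathrm{abs}}\rightleftarrows\sset\colon\mathrm{const}_{\cI}$ I would first check that it is a Quillen pair for the absolute \emph{level} model structures: the absolute level (acyclic) fibrations are detected levelwise, and $\mathrm{const}_{\cI}$ sends a Joyal (acyclic) fibration to the diagram that is that map at every level. To pass to the $\cI$-model structure --- which, by the proof of Proposition~\ref{lem:I-model}, is the left Bousfield localization at $S$ --- I would invoke Hirschhorn's descent criterion \cite[Theorem~3.3.20]{Hirschhorn_model}, for which one needs the total left derived functor of $\colim_{\cI}$ to send the maps of $S$ to isomorphisms in $\mathrm{Ho}(\sset)$; this holds since $F^{\cI}_{\bld{n}}(*)$ is cofibrant and $\colim_{\cI}F^{\cI}_{\bld{n}}(*)=*$. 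By Lemma~\ref{lem:hocolim-colim} the derived left adjoint of the resulting Quillen pair is $\hocolim_{\cI}$.

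The main obstacle is to show that this last Quillen pair is a Quillen \emph{equivalence}. One half is easy: $\colim_{\cI}$ reflects weak equivalences between cofibrant objects, since on level cofibrant objects it agrees with $\hocolim_{\cI}$ up to Joyal equivalence by Lemma~\ref{lem:hocolim-colim}, and $\hocolim_{\cI}f$ being a Joyal equivalence is the definition of $f$ being an $\cI$-equivalence. For the derived counit I would note that $\mathrm{const}_{\cI}K$ is already $\cI$-fibrant when $K$ is a quasi-category (its structure maps are identities, so it is $S$-local), and then, using Lemma~\ref{lem:hocolim-colim} and the homotopy invariance of $\hocolim_{\cI}$, identify the derived counit at $K$ with the natural map $\hocolim_{\cI}(\mathrm{const}_{\cI}K)\to\colim_{\cI}(\mathrm{const}_{\cI}K)=K$. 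So everything comes down to the claim that the homotopy colimit over $\cI$ of a constant diagram recovers its value. This is the genuinely delicate point, and the one place in this argument where the Joyal model structure behaves differently from the Kan one: one cannot simply quote ``the homotopy colimit of a constant diagram is $N\cI$ times the value'', because $N\cI$ is \emph{not} contractible in the Joyal model structure --- the category $\cI$ is not equivalent to the terminal category. The resolution is that $\hocolim_{\cI}$ here is built from cosimplicial frames for the \emph{Joyal} model structure, so it presents $\hocolim_{\cI}(\mathrm{const}_{\cI}K)$ as $N\cI\otimes K$ for the canonical framing action $\otimes$ of simplicial sets with the Kan model structure on $\sset$ with the Joyal model structure; this action is homotopy invariant in the simplicial-set variable and $N\cI$ \emph{is} Kan-contractible, because $\cI$ has the initial object $\bld{0}$, so the map $N\cI\otimes K\to\Delta^0\otimes K=K$ is a Joyal equivalence. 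Alternatively, and more economically, this Quillen equivalence is already contained in \cite[Theorem~5.2]{Dugger_replacing} applied to the contractible category $\cI$, the very result invoked for the existence of $\ssetI_{\mathrm{abs}}$. Composing the two Quillen equivalences gives the chain of the corollary.
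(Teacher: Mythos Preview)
Your argument is correct, and in the end it lands exactly where the paper does: the paper's proof simply says that the $(\id,\id)$ adjunction is clearly a Quillen equivalence and that $(\colim_{\cI},\mathrm{const}_{\cI})$ is a Quillen equivalence by \cite[Theorem~5.2(b)]{Dugger_replacing}. Everything you wrote before invoking Dugger---the verification that $(\colim_{\cI},\mathrm{const}_{\cI})$ is a Quillen pair on the level structures, the descent through the localization, the analysis of the derived counit, and the framing discussion explaining why Kan-contractibility of $N\cI$ suffices even though $N\cI$ is not Joyal-contractible---is essentially a sketch of what Dugger's theorem does internally, so it is redundant once you cite that theorem. Your observation about $N\cI$ is a nice point of clarification, but since Dugger's result is stated for an arbitrary model category and only requires the indexing category to have weakly contractible nerve in the classical sense, no separate argument is needed here.
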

\begin{proof}
It is clear that $(\mathrm{id},\mathrm{id})$ is a Quillen equivalence. The adjunction $(\colim_{\cI}, \mathrm{const}_{\cI})$ is a Quillen equivalence by~\cite[Theorem~5.2(b)]{Dugger_replacing}.
\end{proof}
The next lemma and the subsequent proposition are analogous to~\cite[Proposition 7.1(iii)-(v) and Proposition 8.2]{Sagave-S_diagram}. The proofs given here avoid using features of the Bousfield-Kan formula for homotopy colimits. 
\begin{lemma}\label{lem:properties-level-cof}
\begin{enumerate}[(i)]
\item The gluing lemma for levelwise monomorphisms and level equivalences holds.
\item The gluing lemma for levelwise monomorphisms and $\cI$-equivalences holds.
\item For any ordinal $\lambda$ and any $\lambda$-sequence $(X_{\alpha})_{\alpha < \lambda}$ of levelwise monomorphisms, the canonical map $\hocolim_{\alpha < \lambda}X_{\alpha} \to \colim_{\alpha < \lambda}X_{\alpha}$ is a level equivalence. 
\end{enumerate}
\end{lemma}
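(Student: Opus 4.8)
My plan is to deduce all three assertions from two features of the Joyal model structure on $\sset$: it is left proper, and its cofibrations are exactly the monomorphisms. Since colimits in $\ssetI$ are formed levelwise, part~(i) is then immediate: evaluating the gluing diagram at any $\bld{n}$ yields a map of pushout squares of simplicial sets in which the maps $C(\bld{n})\to A(\bld{n})$ and $C'(\bld{n})\to A'(\bld{n})$ are monomorphisms, hence Joyal cofibrations, and in which the vertical maps are Joyal equivalences (being level equivalences), so the gluing lemma for the left proper Joyal model structure applies level by level. For part~(iii) I will again argue levelwise: for each $\bld{n}$ the $\lambda$-sequence $\bigl(X_\alpha(\bld{n})\bigr)_{\alpha<\lambda}$ consists of monomorphisms between (automatically Joyal cofibrant) simplicial sets and is therefore Reedy cofibrant as a diagram indexed by the direct category $\lambda$, so \cite[Theorem 19.9.1]{Hirschhorn_model} shows that $\hocolim_{\alpha<\lambda}X_\alpha(\bld{n})\to\colim_{\alpha<\lambda}X_\alpha(\bld{n})$ is a Joyal equivalence. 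Choosing the cosimplicial frames that define $\hocolim_{\alpha<\lambda}$ levelwise, so that both $\hocolim_{\alpha<\lambda}$ and $\colim_{\alpha<\lambda}$ commute with evaluation at $\bld{n}$, this is precisely the statement that $\hocolim_{\alpha<\lambda}X_\alpha\to\colim_{\alpha<\lambda}X_\alpha$ is a level equivalence.

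The substance is in~(ii). Given a gluing diagram in which $C\to A$ and $C'\to A'$ are levelwise monomorphisms and the vertical maps are $\cI$-equivalences, the first step is to show that its two pushout squares are homotopy pushout squares in the absolute level model structure. For this I will use that a commutative square in $\ssetI$ is a homotopy pushout for the absolute level model structure if and only if it is one after evaluating at every $\bld{n}$: the functor $\mathrm{Ev}_{\bld{n}}\colon\ssetI\to\sset$ is both left and right Quillen for this model structure, as it has the left adjoint $F^{\cI}_{\bld{n}}$ and a right adjoint given by right Kan extension, each forming a Quillen pair with it; and after evaluation the square is a pushout along the monomorphism $C(\bld{n})\to A(\bld{n})$, which is a homotopy pushout because the Joyal model structure is left proper. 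The second step is to apply $\hocolim_{\cI}$. As $\hocolim_{\cI}$, built from cosimplicial frames, is a point-set model for the left derived functor of $\colim_{\cI}\colon\ssetI\to\sset$ between the absolute level and Joyal model structures, it is homotopy cocontinuous, so it takes a level homotopy pushout square to a homotopy pushout square in $\sset$. Hence $\hocolim_{\cI}(A\cup_C B)$ is naturally the homotopy pushout of $\hocolim_{\cI}A\ot\hocolim_{\cI}C\to\hocolim_{\cI}B$, and likewise for the primed span; by the definition of $\cI$-equivalence the induced map of spans is an objectwise Joyal equivalence, so homotopy invariance of homotopy pushouts shows that $\hocolim_{\cI}(A\cup_C B)\to\hocolim_{\cI}(A'\cup_{C'}B')$ is a Joyal equivalence, that is, $A\cup_C B\to A'\cup_{C'}B'$ is an $\cI$-equivalence.

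The step I expect to be the main obstacle is the first one in~(ii). A levelwise monomorphism need not be a cofibration in the absolute level model structure (nor in the $\cI$-model structure), so a pushout along such a map is not a homotopy pushout for formal reasons, and one cannot simply invoke left properness of those model structures. The way around this is to pass to one level at a time, where monomorphisms do coincide with cofibrations; this is exactly what makes the observation that the absolute level model structure detects both its weak equivalences and its homotopy pushouts levelwise indispensable, and the same descent to individual levels is what drives~(iii). A secondary subtlety, anticipated by the remark preceding the lemma, is that $\hocolim_{\cI}$ has here been defined via cosimplicial frames rather than the Bousfield--Kan formula; but its commuting with homotopy pushouts is formal, being part of its being a model for the derived colimit, so no feature of the Bousfield--Kan formula is used.
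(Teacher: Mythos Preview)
Your argument is correct. For parts~(i) and~(iii) it coincides with the paper's proof, which likewise appeals levelwise to left properness of the Joyal model structure and to \cite[Theorem~19.9.1]{Hirschhorn_model}. For part~(ii) you take a genuinely different route: the paper uses~(i) together with an absolute level cofibrant replacement to reduce to the case where all objects in the gluing diagram are absolute cofibrant, and then applies the strict functor $\colim_{\cI}$, which is left Quillen and hence sends the resulting cofibrant gluing diagram to one in $\sset$ to which the Joyal gluing lemma applies directly. Your argument instead keeps the original, possibly non-cofibrant diagram, identifies the two pushouts as homotopy pushouts in the absolute level model structure via the levelwise detection you describe, and then transports this through the derived functor $\hocolim_{\cI}$. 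Both approaches ultimately encode the same fact---that the left derived colimit over $\cI$ commutes with homotopy pushouts---but the paper's version is more hands-on in that it avoids invoking the abstract homotopy cocontinuity of $\hocolim_{\cI}$ and the levelwise characterization of homotopy pushouts, at the cost of an explicit cofibrant-replacement step.
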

\begin{proof}
Part (i) follows from the gluing lemma in left proper model categories~\cite[Proposition 13.5.4]{Hirschhorn_model}. Using (i) and the absolute level cofibrant replacement, it is enough to show (ii) for a diagram of absolute cofibrant objects. This special case follows from the gluing lemma in the Joyal model structure by applying $\colim_{\cI}$. Part (iii) follows from~\cite[Theorem 19.9.1]{Hirschhorn_model}. 
\end{proof}
\begin{proposition}\label{prop:cofibrant-boxtimes-preserves}
If $X$ is absolute cofibrant in $\ssetI$, then $X\boxtimes -$
preserves $\cI$-equivalences between not necessarily cofibrant objects. 
\end{proposition}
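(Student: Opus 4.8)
The plan is to follow the proof of~\cite[Proposition~8.2]{Sagave-S_diagram}, replacing its use of the Bousfield--Kan formula by the gluing and transfinite composition statements of Lemma~\ref{lem:properties-level-cof}. Fix an $\cI$-equivalence $f\colon Y\to Z$. For every object $W$ of $\ssetI$ the functor $(-)\boxtimes W$ is a left adjoint, hence cocontinuous, and since $\cI$-equivalences are closed under retracts it is enough to prove that $X\boxtimes f$ is an $\cI$-equivalence when $X$ is a cell complex built from the generating cofibrations $I^{\mathrm{level}}_{\mathrm{abs}}$, say $X=\colim_{\alpha<\lambda}X_{\alpha}$ with $X_{0}=\emptyset$, with $X_{\alpha+1}$ obtained from $X_{\alpha}$ by a pushout along a coproduct of maps $F^{\cI}_{\bld{m}}(\partial\Delta^{n})\to F^{\cI}_{\bld{m}}(\Delta^{n})$, and with $X_{\beta}=\colim_{\alpha<\beta}X_{\alpha}$ at limit ordinals $\beta$. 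I would then prove by transfinite induction on $\alpha$ that $X_{\alpha}\boxtimes f$ is an $\cI$-equivalence; the case $X=F^{\cI}_{\bld{m}}(K)$, needed for the cell attachments, is treated at the end.

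For the successor step I would apply $(-)\boxtimes Y$ and $(-)\boxtimes Z$ to the pushout defining $X_{\alpha+1}$; by cocontinuity this exhibits $X_{\alpha+1}\boxtimes f$ as the map of pushouts induced by $f$ from $X_{\alpha}\boxtimes f$ (an $\cI$-equivalence by the inductive hypothesis) and the coproducts of the maps $F^{\cI}_{\bld{m}}(\partial\Delta^{n})\boxtimes f$ and $F^{\cI}_{\bld{m}}(\Delta^{n})\boxtimes f$ (which are $\cI$-equivalences by the case $X=F^{\cI}_{\bld{m}}(K)$, since coproducts of $\cI$-equivalences are $\cI$-equivalences). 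The corresponding attaching map $\coprod F^{\cI}_{\bld{m}}(\partial\Delta^{n})\boxtimes W\to\coprod F^{\cI}_{\bld{m}}(\Delta^{n})\boxtimes W$ is a levelwise monomorphism, so Lemma~\ref{lem:properties-level-cof}(ii) shows that $X_{\alpha+1}\boxtimes f$ is an $\cI$-equivalence. For a limit ordinal $\beta$, the maps in the sequences $(X_{\alpha}\boxtimes Y)_{\alpha<\beta}$ and $(X_{\alpha}\boxtimes Z)_{\alpha<\beta}$ are levelwise monomorphisms, so by Lemma~\ref{lem:properties-level-cof}(iii) the canonical maps $\hocolim_{\alpha<\beta}(X_{\alpha}\boxtimes W)\to X_{\beta}\boxtimes W$ are level equivalences; since a homotopy colimit over $\beta$ preserves $\cI$-equivalences and level equivalences are $\cI$-equivalences, the two-out-of-three property applied to $\hocolim_{\alpha<\beta}(X_{\alpha}\boxtimes f)$ yields that $X_{\beta}\boxtimes f$ is an $\cI$-equivalence.

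For the remaining case $X=F^{\cI}_{\bld{m}}(K)$ with $K$ an arbitrary simplicial set, I would use the explicit description of $\boxtimes$ coming from its defining left Kan extension formula: $(F^{\cI}_{\bld{m}}(K)\boxtimes B)(\bld{n})$ is the coproduct, over the injections $\chi\colon\bld{m}\to\bld{n}$, of the simplicial sets $K\times B(\bld{n}\setminus\chi(\bld{m}))$, where $\bld{n}\setminus\chi(\bld{m})$ is viewed as an object of $\cI$ through its ordering and the structure maps in $\bld{n}$ permute summands and extend injections. Since coproducts and products with a fixed simplicial set preserve Joyal equivalences, $F^{\cI}_{\bld{m}}(K)\boxtimes(-)$ preserves level equivalences. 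To see that it preserves $\cI$-equivalences, I would choose absolute cofibrant replacements $Y^{c}\to Y$, $Z^{c}\to Z$ and a lift $f^{c}\colon Y^{c}\to Z^{c}$; then $f^{c}$ is again an $\cI$-equivalence. Because $\boxtimes$ satisfies the pushout-product axiom for the absolute level cofibrations (which reduces to monomorphisms of simplicial sets being closed under pushout-product), $F^{\cI}_{\bld{m}}(K)\boxtimes Y^{c}$ and $F^{\cI}_{\bld{m}}(K)\boxtimes Z^{c}$ are absolute cofibrant, so by Lemma~\ref{lem:hocolim-colim} the map $\hocolim_{\cI}(F^{\cI}_{\bld{m}}(K)\boxtimes f^{c})$ is a Joyal equivalence if and only if $\colim_{\cI}(F^{\cI}_{\bld{m}}(K)\boxtimes f^{c})$ is. The latter equals $K\times\colim_{\cI}f^{c}$, since $\colim_{\cI}$ is strong symmetric monoidal and $\colim_{\cI}F^{\cI}_{\bld{m}}(K)\cong K$ (by $\colim_{\cI}\dashv\mathrm{const}_{\cI}$ and $\mathrm{Ev}_{\bld{m}}\circ\mathrm{const}_{\cI}=\id$); moreover $\colim_{\cI}f^{c}$ is a Joyal equivalence, since, again by Lemma~\ref{lem:hocolim-colim}, it agrees with $\hocolim_{\cI}f^{c}$, which is a Joyal equivalence as $f^{c}$ is an $\cI$-equivalence. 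As products with $K$ preserve Joyal equivalences, $F^{\cI}_{\bld{m}}(K)\boxtimes f^{c}$ is an $\cI$-equivalence, and comparing it with $F^{\cI}_{\bld{m}}(K)\boxtimes f$ through the level equivalences $F^{\cI}_{\bld{m}}(K)\boxtimes Y^{c}\to F^{\cI}_{\bld{m}}(K)\boxtimes Y$ and the analogue for $Z$ completes this case, and hence the proof.

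I expect the main obstacle to be this last case, and within it the assertion that $F^{\cI}_{\bld{m}}(K)\boxtimes(-)$ preserves \emph{all} level equivalences: proving this requires the explicit ``generalized shift'' description of $F^{\cI}_{\bld{m}}(K)\boxtimes B$ rather than a formal model-categorical argument. Two further points call for care: that products with a fixed simplicial set preserve Joyal equivalences, which relies on the Joyal model structure being a cartesian model structure, and that all the homotopy colimits above must be the frame-based ones of~\cite[\S~19]{Hirschhorn_model}, since the Bousfield--Kan formula is not homotopy invariant for the Joyal model structure.
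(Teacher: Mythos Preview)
Your proposal is correct and follows essentially the same strategy as the paper's proof. The paper treats the case $X=F^{\cI}_{\bld{k}}(L)$ first, computing $(F^{\cI}_{\bld{k}}(L)\boxtimes Y)(\bld{m})$ via~\cite[Lemma~5.6]{Sagave-S_diagram} as $L\times\colim_{\bld{k}\concat\bld{l}\to\bld{m}}Y(\bld{l})$ and then invoking~\cite[Corollary~5.9]{Sagave-S_diagram} (each component of the comma category has a terminal object) to see this preserves level equivalences---exactly your ``generalized shift'' coproduct description, just phrased differently; for general $X$ the paper simply cites the cell-induction argument of~\cite[Proposition~8.2]{Sagave-S_diagram} with Lemma~\ref{lem:properties-level-cof} substituted in, which is what you spell out in detail.
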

\begin{proof}
  We first assume that $X = F_{\bld{k}}^{\cI}(L)$ with $L \in
  \sset$ and $\bld{k} \in \cI$. Let $Y\to Z$ be an
  $\cI$-equivalence. If $Y$ and $Z$ are absolute cofibrant, then the
  claim follows by applying the strong symmetric monoidal functor
  $\colim_{\cI}$ and using Corollary~\ref{cor:colim-Q-equiv} and
  the pushout-product axiom for the Joyal model structure~\cite[2.15
  Proposition]{Dugger-S_mapping}. If $Y^c \to Y$ is an absolute level
  cofibrant replacement, then~\cite[Lemma 5.6]{Sagave-S_diagram} implies that $(X\boxtimes (Y^c \to Y))(\bld{m})$ is
  isomorphic to
  \begin{equation}\label{eq:cofibrant-boxtimes-preserves}
    L \times \colim_{\bld{k}\concat\bld{l}\to \bld{m}} Y^c(\bld{l}) \to L \times \colim_{\bld{k}\concat\bld{l}\to \bld{m}} Y(\bld{l}).
  \end{equation}
  Since each connected component of the comma category $\bld{k}\concat
  - \downarrow \bld{m}$ has a terminal object~\cite[Corollary
  5.9]{Sagave-S_diagram}, the colimits
  in~\eqref{eq:cofibrant-boxtimes-preserves} are Joyal equivalent to
  the corresponding homotopy colimits
  and~\eqref{eq:cofibrant-boxtimes-preserves} is a level
  equivalence. It follows that $X\boxtimes (Y \to Z)$ is an
  $\cI$-equivalence since $X\boxtimes (Y^c \to Z^c)$ is. With
  Lemma~\ref{lem:properties-level-cof} replacing those parts
  of~\cite[Proposition 7.1]{Sagave-S_diagram} that involve weak
  equivalences, the case of general $X$ follows as in the proof
  of~\cite[Proposition 8.2]{Sagave-S_diagram}.
\end{proof}
\begin{corollary}\label{cor:pproduct-monoid}
  The absolute and positive $\cI$-model structures on $\ssetI$ satisfy
  the pushout-product axiom and the monoid axiom.
\end{corollary}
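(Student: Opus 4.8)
The plan is to follow the proof of the analogous statement for the Kan $\cI$-model structures in \cite[\S 8]{Sagave-S_diagram}, using Proposition~\ref{prop:cofibrant-boxtimes-preserves} in place of \cite[Proposition 8.2]{Sagave-S_diagram} and Lemma~\ref{lem:properties-level-cof} in place of those parts of \cite[Proposition 7.1]{Sagave-S_diagram} that involve weak equivalences; the point needing care is that every appeal to Proposition~\ref{prop:cofibrant-boxtimes-preserves} must be arranged so that one of the two $\boxtimes$-factors is cofibrant. The cofibration part of the pushout-product axiom involves only the cofibrations, which are the corresponding level cofibrations, so by the usual closure properties of the pushout-product under pushouts, transfinite compositions and retracts it suffices to treat two generating cofibrations. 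The natural isomorphism $F^{\cI}_{\bld{m}}(K)\boxtimes F^{\cI}_{\bld{n}}(L)\cong F^{\cI}_{\bld{m}\concat\bld{n}}(K\times L)$ identifies the pushout-product of $F^{\cI}_{\bld{m}}(i)$ and $F^{\cI}_{\bld{n}}(j)$ with $F^{\cI}_{\bld{m}\concat\bld{n}}$ applied to the pushout-product of $i$ and $j$ in $\sset$, so this case follows from the pushout-product axiom for the Joyal model structure \cite[2.15 Proposition]{Dugger-S_mapping} and the fact that $F^{\cI}_{\bld{m}\concat\bld{n}}$ sends monomorphisms to level cofibrations; for the positive model structure one uses in addition that $\cI_+$ is closed under $\concat$.

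For the acyclic part, using the symmetry of the pushout-product I may take $f\colon A'\to A$ to be a cofibration and $g\colon C\to D$ a cofibration that is also an $\cI$-equivalence, and as before it suffices to take $f=F^{\cI}_{\bld{n}}(i)$ a generating cofibration, so that $A'$ and $A$ are cofibrant. Then $A\boxtimes g$ and $A'\boxtimes g$ are $\cI$-equivalences by Proposition~\ref{prop:cofibrant-boxtimes-preserves}, and they are cofibrations --- being the pushout-products of $g$ with the cofibrations $\emptyset\to A$ and $\emptyset\to A'$ --- so they are acyclic cofibrations. Writing the pushout-product $f\square g$ as a map $P\to A\boxtimes D$ with $P=A\boxtimes C\cup_{A'\boxtimes C}A'\boxtimes D$, the canonical map $A\boxtimes C\to P$ is a cobase change of the acyclic cofibration $A'\boxtimes g$ and hence an acyclic cofibration; its composite with $f\square g$ equals the acyclic cofibration $A\boxtimes g$, so $f\square g$ is an $\cI$-equivalence by two-out-of-three, and it is a cofibration by the cofibration part.

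For the monoid axiom I would invoke the standard reduction to a generating set of acyclic cofibrations: every acyclic cofibration is a retract of a transfinite composite of pushouts of maps in a chosen generating set $J$, and $-\boxtimes Z$ commutes with these constructions, so it is enough to show that every map built from maps $j\boxtimes Z$ (with $j\in J$ and $Z$ arbitrary) by cobase change, transfinite composition and retracts is an $\cI$-equivalence. Here $J$ may be chosen so that every $j\colon A\to B$ has $A$ and $B$ cofibrant: the level model structures have generating acyclic cofibrations $F^{\cI}_{\bld{n}}(k)$ with cofibrant domain and codomain since every simplicial set is Joyal-cofibrant and $F^{\cI}_{\bld{n}}$ is left Quillen, and the generators added in the Bousfield localizations of Proposition~\ref{lem:I-model} are assembled from cofibrant objects. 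For such a $j$ and arbitrary $Z$, pick an absolute level cofibrant replacement $q\colon Z^{c}\to Z$, with $Z^{c}$ cofibrant and $q$ a level equivalence. Then $j\boxtimes Z^{c}$, $A\boxtimes q$ and $B\boxtimes q$ are $\cI$-equivalences by Proposition~\ref{prop:cofibrant-boxtimes-preserves}, so two-out-of-three applied to the naturality square of $-\boxtimes q$ at $j$ shows that $j\boxtimes Z$ is an $\cI$-equivalence. It is moreover a levelwise monomorphism, since checking on generating level cofibrations with \cite[Lemma 5.6 and Corollary 5.9]{Sagave-S_diagram} as in the proof of Proposition~\ref{prop:cofibrant-boxtimes-preserves} shows that $-\boxtimes Z$ sends level cofibrations to levelwise monomorphisms. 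Finally the class of levelwise monomorphisms that are $\cI$-equivalences is closed under cobase change along arbitrary maps by Lemma~\ref{lem:properties-level-cof}(ii), under transfinite composition by Lemma~\ref{lem:properties-level-cof}(iii) together with homotopy invariance of $\hocolim_{\cI}$, and trivially under retracts; this yields the monoid axiom.

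The only steps beyond bookkeeping are the uses of Proposition~\ref{prop:cofibrant-boxtimes-preserves}, which is exactly what forces the reductions to generating (acyclic) cofibrations with cofibrant source and target, and the stability of levelwise-monomorphic $\cI$-equivalences under cobase change and transfinite composition, which is the point of Lemma~\ref{lem:properties-level-cof} and replaces the Bousfield--Kan arguments of \cite{Sagave-S_diagram}. I expect the main obstacle to be the cobase-change step in the monoid axiom, where $j\boxtimes Z$ is pushed out along an arbitrary map rather than a cofibration, so left properness cannot be applied directly and one must instead run the gluing lemma for levelwise monomorphisms and $\cI$-equivalences.
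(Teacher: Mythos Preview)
Your proof is correct and follows essentially the same approach as the paper: the cofibration part of the pushout-product axiom is the same statement as for the Kan $\cI$-model structures (the paper simply cites \cite[Proposition~8.4]{Sagave-S_diagram}), and the remaining parts are obtained by running the arguments of \cite[\S 8]{Sagave-S_diagram} with Proposition~\ref{prop:cofibrant-boxtimes-preserves} replacing \cite[Proposition~8.2]{Sagave-S_diagram} and Lemma~\ref{lem:properties-level-cof} replacing the weak-equivalence parts of \cite[Proposition~7.1]{Sagave-S_diagram}. The paper compresses all of this into two sentences, whereas you have unpacked the argument explicitly; your justification that the generating acyclic cofibrations can be taken with cofibrant domains and codomains is slightly informal, and you could instead invoke \cite[Corollaries~2.7 and~2.8]{barwick_left-right} as the paper does elsewhere.
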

\begin{proof}
  The part of the pushout-product axiom involving only cofibrations
  results from~\cite[Proposition 8.4]{Sagave-S_diagram}. As
  in~\cite[\S 8]{Sagave-S_diagram},
  Proposition~\ref{prop:cofibrant-boxtimes-preserves} implies the rest.
\end{proof}

The following lemma is analogous to~\cite[Lemma 8.1]{Sagave-S_diagram}.
\begin{lemma}\label{lem:quotient-by-free}
Let $G$ be a finite group and let $f\colon X \to Y$ and $Y \to E$ be  morphisms in $(\ssetI)^G$ such that  $\hocolim_{\cI}f$ is a Joyal equivalence. If $G$ acts freely on $E(\bld{m})$ for every object $\bld{m}$ in $\cI$, then $f/G \colon X/G \to Y/G$ is an $\cI$-equivalence. 
\end{lemma}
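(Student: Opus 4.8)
The plan is to push the statement through $\hocolim_{\cI}$, reducing it to the corresponding fact about a free $G$-action on a single simplicial set, for which a short model-category argument suffices.

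First I would note that $G$ acts freely on $X(\bld{m})$ and $Y(\bld{m})$ for every object $\bld{m}$ of $\cI$: the composite $X \to Y \to E$ and the map $Y \to E$ are $G$-equivariant, $G$ acts freely on each $E(\bld{m})$, and any $G$-set admitting a $G$-equivariant map to a free $G$-set is itself free. Thus $f$ is a map in $(\ssetI)^G$ between objects that are levelwise free.

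Next I would pin down the interaction of $\hocolim_{\cI}$ with $(-)/G$. Since $\hocolim_{\cI}$ here denotes Hirschhorn's construction from~\cite[\S 19]{Hirschhorn_model} rather than the Bousfield--Kan formula, the key is to run that construction with a cosimplicial frame chosen functorially and compatibly with quotients, namely $Z \mapsto Z \times \mathbf{E}[\bullet]$ for a fixed Reedy cofibrant cosimplicial resolution $\mathbf{E}[\bullet]$ of the point \emph{in the Joyal model structure}. (The naive resolution $p \mapsto \Delta^p$ is useless here since $\Delta^p$ is not Joyal-contractible; one can instead take $\mathbf{E}[p]$ to be the nerve of the codiscrete category on $\{0,\dots,p\}$.) With this choice $\hocolim_{\cI}X$ is the realization of the simplicial object $[p] \mapsto \coprod_{\bld{m}_0 \to \dots \to \bld{m}_p} X(\bld{m}_0) \times \mathbf{E}[p]$, equivalently the diagonal of the associated bisimplicial set. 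The $G$-action meets only the factors $X(\bld{m}_0)$, and forming coproducts, products with the $G$-trivial $\mathbf{E}[p]$, and diagonals of bisimplicial sets each commute with $(-)/G$ and preserve levelwise-freeness; hence $\hocolim_{\cI}(f/G) = (\hocolim_{\cI}f)/G$, and both $\hocolim_{\cI}X$ and $\hocolim_{\cI}Y$ are free $G$-simplicial sets.

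Finally I would invoke the projective model structure on the category $\sset^G$ of $G$-objects in $\sset$ lifted from the Joyal model structure, which exists by~\cite[Theorem 11.6.1]{Hirschhorn_model} just as in Lemma~\ref{lem:lev-model}. Every free $G$-simplicial set is cofibrant in it: build it from its skeleta, using that an automorphism preserves nondegeneracy, so that the nondegenerate simplices in each degree form a free $G$-set. The functor $(-)/G \colon \sset^G \to \sset$ is left Quillen because its right adjoint, the constant-diagram functor, sends (trivial) Joyal fibrations to objectwise (trivial) fibrations. Therefore $(-)/G$ preserves Joyal equivalences between cofibrant objects, in particular between free $G$-simplicial sets, and applying this to the $G$-equivariant Joyal equivalence $\hocolim_{\cI}f$ shows that $\hocolim_{\cI}(f/G) = (\hocolim_{\cI}f)/G$ is a Joyal equivalence, i.e., $f/G$ is an $\cI$-equivalence.

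I expect the main obstacle to be the second step: verifying that the frame-based $\hocolim_{\cI}$ — introduced precisely because the Bousfield--Kan formula is not homotopy invariant for the Joyal model structure — still commutes with $(-)/G$ and preserves free $G$-actions, and checking that the chosen cosimplicial object really is a resolution of the point in the Joyal model structure. The model-category input of the last step is what replaces the classical ``a free $G$-action has homotopy-invariant quotient'' argument used in~\cite[Lemma 8.1]{Sagave-S_diagram}, whose proof there relied on features of the Kan model structure.
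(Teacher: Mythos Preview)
Your argument is correct but differs from the paper's. You commute $\hocolim_{\cI}$ on the nose with the strict orbit functor $(-)/G$ by choosing an explicit product frame $Z\mapsto Z\times\mathbf{E}[\bullet]$, and then show that $(-)/G$ preserves Joyal equivalences between free $G$-simplicial sets via the projective model structure on $\sset^G$. The paper instead stays entirely at the level of homotopy colimits: from the freeness of the action on each $Y(\bld m)$ it concludes that $\hocolim_G Y(\bld m)\to (Y/G)(\bld m)$ is a Joyal equivalence, and then invokes the Fubini relation $\hocolim_G\hocolim_{\cI}f\simeq\hocolim_{\cI}\hocolim_G f\simeq\hocolim_{\cI}(f/G)$, the first term being a Joyal equivalence because $\hocolim_G$ is homotopy invariant. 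The paper's route is shorter and sidesteps exactly the obstacle you flag: it never has to produce a Joyal-model cosimplicial resolution of the point, nor verify that Hirschhorn's frame-based $\hocolim_{\cI}$ literally commutes with quotients. Your route is more explicit and self-contained, with the left Quillen property of $(-)/G$ doing the work that Fubini does in the paper; the price is the technical verification in your second step (and note that your displayed formula for the framed hocolim is a bit looser than Hirschhorn's coend definition, though the only feature you actually use---that it is built from the $X(\bld m)$ and $G$-trivial simplicial sets by colimits and products, hence commutes with $(-)/G$---holds for any product frame).
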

\begin{proof}
Since there is a $G$-map $Y\to E$, the $G$-action on $Y(\bld{m})$ is also free. Hence
$\hocolim_GY(\bld{m}) \to \colim_GY(\bld{m}) \iso (Y/G)(\bld{m})$ is a Joyal equivalence. Using the same argument for $X$, it follows that 
\[ \hocolim_G \hocolim_{\cI} f \simeq \hocolim_{\cI}\hocolim_G f \simeq \hocolim_{\cI}(f/G) \]
is a Joyal equivalence. 
\end{proof}

The use of the positive model structure is motivated by the positive
model structure for symmetric spectra discovered by Jeff Smith. 
The next lemma highlights one of its key features.
\begin{lemma}\label{lem:free-on-pos}
  If $X$ is positive $\cI$-cofibrant, then the $\Sigma_n$-action on
  the simplicial set $(X^{\boxtimes n})(\bld{m})$ is free
   for every object $\bld{m}$ of $\cI$.
\end{lemma}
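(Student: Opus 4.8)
The statement is purely about the class of positive cofibrations and the functor $\boxtimes$, and (as observed after Proposition~\ref{lem:I-model}) these agree with the corresponding data of the Kan positive $\cI$-model structure of \cite[Proposition~6.16]{Sagave-S_diagram}; so the cleanest route is to cite the corresponding lemma from \cite{Sagave-S_diagram} verbatim. For a self-contained argument I would proceed as follows. By Lemma~\ref{lem:lev-model} and the usual retract argument, every positive $\cI$-cofibrant $X$ is a retract of an $I^{\mathrm{level}}_{\mathrm{pos}}$-cell complex $X'$. Since $(-)^{\boxtimes n}$ is a $\Sigma_n$-equivariant functor, a retract diagram $X\to X'\to X$ induces a $\Sigma_n$-equivariant retract of $(X^{\boxtimes n})(\bld m)$ off $(X'^{\boxtimes n})(\bld m)$, and freeness of a $\Sigma_n$-action on a simplicial set passes to retracts because the stabilizer of a simplex embeds into the stabilizer of its image. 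Hence it suffices to treat cell complexes.

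A cell complex is a transfinite composition $\emptyset=X_0\to X_1\to\cdots$ of pushouts along coproducts of maps $F^{\cI}_{\bld d_s}(\partial\Delta^{k_s}\to\Delta^{k_s})$ with all $|\bld d_s|\geq 1$. As $\boxtimes$ preserves colimits in each variable, $X^{\boxtimes n}=\colim_\alpha X_\alpha^{\boxtimes n}$, and one checks (using the behaviour of $\boxtimes$ on levelwise monomorphisms recorded in \cite[\S5]{Sagave-S_diagram}) that this is a transfinite composition along levelwise monomorphisms, so the maps $(X_\alpha^{\boxtimes n})(\bld m)\to (X^{\boxtimes n})(\bld m)$ are injective and freeness on all stages implies freeness on the colimit. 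Since the claim is trivial for $X_0=\emptyset$, it remains to show that a single cell attachment $A\to B$ along $\coprod_s F^{\cI}_{\bld d_s}(\partial\Delta^{k_s}\to\Delta^{k_s})$, with all $|\bld d_s|\geq 1$, sends objects with free $\Sigma_n$-action on every level of the $n$-fold $\boxtimes$-power to objects with the same property.

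The base computation is that for $|\bld d|\geq 1$ and any $K\in\sset$ one has $F^{\cI}_{\bld d}(K)^{\boxtimes n}\iso F^{\cI}_{\bld d^{\concat n}}(K^{\times n})$, where $\bld d^{\concat n}$ is the $n$-fold concatenation and $\Sigma_n$ simultaneously permutes its $n$ blocks and the $n$ factors of $K^{\times n}$; evaluating at $\bld m$ gives $\cI(\bld d^{\concat n},\bld m)\times K^{\times n}$ with $\Sigma_n$ acting on the first factor by precomposition with block permutations. As $|\bld d|\geq 1$, a nontrivial permutation of the $n$ blocks is a nontrivial bijection of the underlying set of $\bld d^{\concat n}$, and precomposing an injection into $\bld m$ with a nontrivial bijection changes it, so $\Sigma_n$ acts freely on $\cI(\bld d^{\concat n},\bld m)$ and hence diagonally freely on the product. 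For the inductive step I would use the standard $\Sigma_n$-equivariant filtration $A^{\boxtimes n}=F_0\subseteq F_1\subseteq\cdots\subseteq F_n=B^{\boxtimes n}$ coming from the pushout $B=A\cup_{\coprod_s F^{\cI}_{\bld d_s}(\partial\Delta^{k_s})}\coprod_s F^{\cI}_{\bld d_s}(\Delta^{k_s})$, whose $j$-th subquotient $F_j/F_{j-1}$ is, $\Sigma_n$-equivariantly, induced up along $\Sigma_j\times\Sigma_{n-j}\hookrightarrow\Sigma_n$ from the $\boxtimes$ of a $j$-fold relative power of the attached cells with $A^{\boxtimes(n-j)}$. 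On $A^{\boxtimes(n-j)}$ the factor $\Sigma_{n-j}$ acts freely on every level by the inductive hypothesis; on the relative cell power the factor $\Sigma_j$ acts freely on every level away from the basepoint, by the base computation applied to the $F^{\cI}_{\bld d_s}(\Delta^{k_s})$ (using that every $\bld d_s$ is nonempty and that $F^{\cI}_{\bld d}(K)\boxtimes F^{\cI}_{\bld e}(L)\iso F^{\cI}_{\bld d\concat\bld e}(K\times L)$). Therefore $\Sigma_j\times\Sigma_{n-j}$ acts freely levelwise (away from the basepoint) on the $\boxtimes$, and inducing up to $\Sigma_n$ preserves this. Freeness then propagates from $F_{j-1}$ to $F_j$ on every level: a simplex of $F_j(\bld m)$ either lies in $F_{j-1}(\bld m)$ or maps to a non-basepoint simplex of $(F_j/F_{j-1})(\bld m)$, and in both cases has trivial stabilizer. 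Hence $B^{\boxtimes n}=F_n$ has free $\Sigma_n$-action on every level, which completes the induction.

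The main obstacle is the construction and the precise identification of the filtration $F_\bullet B^{\boxtimes n}$ together with its subquotients: this is the usual ``$n$-cube'' combinatorics for iterated pushout-products and $\boxtimes$-powers in a symmetric monoidal category, here to be carried out in \emph{unpointed} $\cI$-simplicial sets, keeping track of the basepoints created by the relative construction and of the coproduct over the index set of cells, and using that $\boxtimes$ commutes with colimits and is computed by Day convolution along $\concat$. Everything in this step is formal and involves no weak equivalences, which is exactly why it also follows at once from the corresponding statement in \cite{Sagave-S_diagram}.
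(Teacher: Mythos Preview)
Your proposal is correct and follows essentially the same approach as the paper: both proceed by cell induction, use the standard $\Sigma_n$-equivariant filtration of the $n$-fold $\boxtimes$-power of a pushout, and reduce freeness to the observation that for $|\bld d|\geq 1$ a nontrivial block permutation of $\bld d^{\concat i}$ acts freely on $\cI(\bld d^{\concat i},\bld m)$. The paper packages the filtration step by citing \cite[Lemma~A.8]{Sagave-S_diagram} and phrases the conclusion as ``$(\Sigma_{n-i}\times\Sigma_i)$-projective cofibration'' (which is slightly stronger and is what is actually used in Theorem~\ref{thm:existence-operad-model-str}), and it computes $(Y^{\boxtimes n-i}\boxtimes f^{\Box i})(\bld m)$ via \cite[Lemma~5.6 and Corollary~5.9]{Sagave-S_diagram} rather than via your direct formula for free diagrams; but the underlying argument is the same, and your opening remark that one could simply cite \cite{Sagave-S_diagram} is exactly right.
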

\begin{proof}
  Let $f\colon U \to V$ and $U \to Y$ be maps in $\ssetI$. By a cell
  induction argument, it is enough to show that if $f$ is a generating
  cofibration and $\Sigma_n$ acts freely on $(Y^{\boxtimes
    n})(\bld{m})$ for every $\bld{m}$ in $\cI$, then $Z =
  Y\coprod_{U}V$ has this property. By~\cite[Lemma~A.8]{Sagave-S_diagram}, $Y^{\boxtimes n} \to Z^{\boxtimes n}$ has a
  filtration by maps that are cobase changes of maps of the form
  $\Sigma_{n}\times_{\Sigma_{n-i}\times\Sigma_i}Y^{\boxtimes
    n-i}\boxtimes f^{\Box i}$ where $f^{\Box i}$ is the $i$-fold
  iterated pushout product map in $(\ssetI,\boxtimes)$. Hence it
  suffices to show that $(Y^{\boxtimes n-i}\boxtimes f^{\Box
    i})(\bld{m})$ is a $(\Sigma_{n-i}\times\Sigma_i)$-projective
  cofibration of simplicial sets with
  $(\Sigma_{n-i}\times\Sigma_i)$-action. Since $f =
  F_{\bld{k}}^{\cI}(*)\times g$ with $g$ a generating cofibration for
  $\sset$ and $\bld{k}\in \cI_+$, it follows from \cite[Lemma
  5.6]{Sagave-S_diagram} that there is an isomorphism
  \begin{equation}\label{eq:free-on-pos}
    (Y^{\boxtimes n-i}\boxtimes f^{\Box i})(\bld{m}) \iso (\colim_{\bld{k}^{\concat i} \concat \bld{l} \to \bld{m}} Y^{\boxtimes n-i}(\bld{l})) \times g^{\Box i} 
  \end{equation}
  where $g^{\Box i}$ is the $i$-fold iterated pushout-product map of
  $g$ in $(\sset,\times)$. By~\cite[Corollary
  5.9]{Sagave-S_diagram}, each connected component of the indexing
  category $\bld{k}^{\concat i} \concat - \downarrow \bld{m}$ has a
  terminal object, and $\Sigma_i$ acts freely on the set of connected
  components. Hence $ \colim_{\bld{k}^{\concat i} \concat \bld{l} \to
    \bld{m}} Y^{\boxtimes n-i}(\bld{l})$ is a
  $(\Sigma_{n-i}\times\Sigma_i)$-free simplicial set,
  and~\eqref{eq:free-on-pos} is a $(\Sigma_{n-i}\times\Sigma_i)$-projective cofibration.
\end{proof}

\section{Model structures on structured diagrams of simplicial sets}
In the following, an operad $\cD$ denotes a sequence of simplicial
sets $\cD(n)$ with $\Sigma_n$-action such that $\cD(0)=*$, 
there is a unit map $*\to \cD(1)$, and there are structure maps
$\cD(n) \times \cD(i_1)\times \dots \times\cD(i_n) \to \cD(i_1+\dots+i_n)$ satisfying the usual associativity, unit and equivariance relations. 
It is called \emph{$\Sigma$-free} if $\Sigma_n$ acts
freely on $\cD(n)$ for all~$n$. 

Let $\ssetI[\cD]$ be the category of $\cD$-algebras in $(\ssetI,\boxtimes)$. We say that a model structure on $\ssetI$ lifts to $\ssetI[\cD]$ if $\ssetI[\cD]$ admits a model structure where a map is a weak equivalence or fibration if the underlying map in $\ssetI$ is. 

\begin{theorem}\label{thm:existence-operad-model-str}
Let $\cD$ be an operad. The positive $\cI$-model structure lifts to $\ssetI[\cD]$, and the absolute $\cI$-model structure lifts to $\ssetI[\cD]$ if $\cD$ is $\Sigma$-free. 
\end{theorem}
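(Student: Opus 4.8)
The plan is to transfer the positive (resp.\ absolute) $\cI$-model structure along the free--forgetful adjunction $F_{\cD}\colon \ssetI \rightleftarrows \ssetI[\cD]\colon U$, where $F_{\cD}(X)=\coprod_{q\geq 0}\cD(q)\otimes_{\Sigma_q}X^{\boxtimes q}$ is the free $\cD$-algebra and $\otimes$ denotes the levelwise tensoring of $\ssetI$ over $\sset$. The category $\ssetI[\cD]$ is cocomplete, $\ssetI$ is cofibrantly generated with the $\cI$-model structures (Proposition~\ref{lem:I-model}), and $U$ preserves filtered colimits, so the small object argument applies to $F_{\cD}(I^{\mathrm{level}}_{\mathrm{pos}})$ and $F_{\cD}(J^{\mathrm{level}}_{\mathrm{pos}})$ (resp.\ their absolute analogues), and the standard recognition theorem for transferred model structures (cf.~\cite[\S 11]{Hirschhorn_model}) produces the lifted model structure once one verifies the acyclicity condition: every relative $F_{\cD}(J^{\mathrm{level}}_{\mathrm{pos}})$-cell complex (resp., in the $\Sigma$-free case, every relative $F_{\cD}(J^{\mathrm{level}}_{\mathrm{abs}})$-cell complex) is sent by $U$ to an $\cI$-equivalence. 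Since $\cI$-cofibrations and acyclic $\cI$-cofibrations are stable under cobase change and transfinite composition, this reduces to showing that for a $\cD$-algebra $A$ and a generating acyclic cofibration $f\colon U\to V$ the pushout of $\cD$-algebras $A\to B = A\amalg_{F_{\cD}(U)}F_{\cD}(V)$ has underlying map in $\ssetI$ an acyclic $\cI$-cofibration.

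To analyse $B$ I would use the standard filtration $A = B_0\to B_1\to B_2\to\cdots$ with $\colim_q B_q = B$, in which the underlying map $B_{q-1}\to B_q$ in $\ssetI$ is a cobase change of
\[
\bigl(W_q\boxtimes f^{\Box q}\bigr)/\Sigma_q\colon\ \bigl(W_q\boxtimes Q^{q}_{q-1}(f)\bigr)/\Sigma_q \longto \bigl(W_q\boxtimes V^{\boxtimes q}\bigr)/\Sigma_q ,
\]
where $f^{\Box q}\colon Q^{q}_{q-1}(f)\to V^{\boxtimes q}$ is the $q$-fold iterated $\boxtimes$-pushout-product map and $W_q$ is the arity-$q$ part of the enveloping operad of $A$, a $\Sigma_q$-object of $\ssetI$ assembled from $A$ and the spaces $\cD(q+r)$, with $\Sigma_q$ acting on $W_q$ and by permuting the $q$ factors entering $f^{\Box q}$. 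This is the operadic counterpart of the filtration of $\boxtimes$-powers used in~\cite[Lemma~A.8]{Sagave-S_diagram}, and I would set it up exactly as there.

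It then suffices to prove that each layer map $\bigl(W_q\boxtimes f^{\Box q}\bigr)/\Sigma_q$ is an acyclic $\cI$-cofibration, for then the same holds for its cobase change $B_{q-1}\to B_q$, hence for the composite $A\to B$ and for every relative cell complex. That it is an $\cI$-cofibration follows from the pushout-product axiom (Corollary~\ref{cor:pproduct-monoid}) together with the freeness of the relevant $\Sigma_q$-action, exactly as in~\cite[\S8]{Sagave-S_diagram}. That it is an $\cI$-equivalence is the genuinely new point: one applies Lemma~\ref{lem:quotient-by-free} with $G=\Sigma_q$ to the $\Sigma_q$-equivariant map $W_q\boxtimes f^{\Box q}$. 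Since $f$ is an acyclic cofibration, so is $f^{\Box q}$ by the pushout-product axiom, whence $\hocolim_{\cI}\bigl(W_q\boxtimes f^{\Box q}\bigr)$ is a Joyal equivalence by the monoid axiom (Corollary~\ref{cor:pproduct-monoid}); and the target $W_q\boxtimes V^{\boxtimes q}$ carries a free $\Sigma_q$-action at every object $\bm$ of $\cI$. It is here that the two cases differ: in the positive case the codomain $V$ of a map in $J^{\mathrm{level}}_{\mathrm{pos}}$ is of the form $F^{\cI}_{\bn}(K)$ with $\bn\in\cI_+$, hence positive $\cI$-cofibrant ($F^{\cI}_{\bn}$ is left Quillen and every simplicial set is cofibrant), so the argument of Lemma~\ref{lem:free-on-pos} (via the analysis of connected components of the comma categories $\bn^{\concat q}\concat-\downarrow\bm$) shows $(W_q\boxtimes V^{\boxtimes q})(\bm)$ is $\Sigma_q$-free; in the absolute case the $\Sigma$-freeness of $\cD$ makes the $\Sigma_q$-action on $W_q(\bm)$, hence on $(W_q\boxtimes V^{\boxtimes q})(\bm)$, free. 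Either way Lemma~\ref{lem:quotient-by-free} gives that the layer map is an $\cI$-equivalence, completing the acyclicity verification and the transfer.

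I expect the main obstacle to be exactly this control of the $\Sigma_q$-orbits of the filtration layers: it is the only place where the positive model structure (through Lemma~\ref{lem:free-on-pos}) and the $\Sigma$-freeness of $\cD$ are genuinely needed, and also the step where the failure of the Joyal model structure to be simplicial and its lack of an explicit generating set $J$ would normally obstruct the standard arguments. Corollary~\ref{cor:pproduct-monoid} and Lemmas~\ref{lem:free-on-pos} and~\ref{lem:quotient-by-free} were established above precisely so that this step goes through without a simplicial enrichment, after which the rest of the argument can follow the pattern of~\cite{Sagave-S_diagram}.
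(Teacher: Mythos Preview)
Your overall strategy matches the paper's: transfer along the free--forgetful adjunction, reduce to a single pushout of a free map on a generating acyclic cofibration, analyse it via the filtration of~\cite[Lemma~A.8/Lemma~9.5]{Sagave-S_diagram}, and supply the needed $\Sigma_q$-freeness via Lemma~\ref{lem:free-on-pos} (positive case) or the $\Sigma$-freeness of $\cD$ (absolute case), using Lemma~\ref{lem:quotient-by-free} in place of~\cite[Lemma~8.1]{Sagave-S_diagram}. That is exactly what the paper does.

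There is, however, a genuine gap in one step. You run the transfer with $F_{\cD}(J^{\mathrm{level}}_{\mathrm{pos}})$ (resp.\ $F_{\cD}(J^{\mathrm{level}}_{\mathrm{abs}})$) as the generating acyclic cofibrations and then argue that the codomain $V$ of any such generator is of the form $F^{\cI}_{\bld{n}}(K)$ with $\bld{n}\in\cI_+$. But $J^{\mathrm{level}}_{\mathrm{pos}}$ generates the acyclic cofibrations of the \emph{level} model structure, not of the $\cI$-model structure you are trying to lift. The $\cI$-model structures are left Bousfield localizations (Proposition~\ref{lem:I-model}), so their classes of acyclic cofibrations are strictly larger and their generating sets must contain additional maps (horns on the localizing maps, etc.) which are not of the explicit shape $F^{\cI}_{\bld{n}}(j)$. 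Consequently your freeness argument for $V^{\boxtimes q}(\bld{m})$ does not apply to all the maps you actually need.

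The paper fixes exactly this point: since the generating cofibrations $I^{\mathrm{level}}_{\mathrm{pos}}$ have (positive) cofibrant domains and codomains, by~\cite[Corollaries~2.7 and~2.8]{barwick_left-right} one may choose a set of generating acyclic cofibrations for the positive $\cI$-model structure whose domains and codomains are likewise positive cofibrant. Then Lemma~\ref{lem:free-on-pos} applies to the codomain $V$ of any such generator and gives the required $\Sigma_q$-freeness of $V^{\boxtimes q}(\bld{m})$. With this correction your argument goes through and coincides with the paper's.
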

Since the generating cofibrations coincide, these model structures
have the same cofibrations as the corresponding Kan $\cI$-model
structures~\cite[Proposition 9.3]{Sagave-S_diagram}.
\begin{proof}
  As in the analogous statement about the Kan $\cI$-model structure~\cite[Proposition 9.3]{Sagave-S_diagram}, the claim reduces to showing that for a generating acyclic cofibration $f\colon U \to V$ in $\ssetI$, the bottom map in a pushout square
\[\xymatrix@-1pc{ 
\textstyle\coprod_{n\geq 0}\cD(n) \times_{\Sigma_n} U^{\boxtimes n} \ar[r] \ar[d] & \textstyle\coprod_{n\geq 0}\cD(n) \times_{\Sigma_n} V^{\boxtimes n}  \ar[d] \\ X \ar[r] & Y
}\]
in $\ssetI[\cD]$ is an $\cI$-equivalence. Replacing~\cite[Propositions 8.4 and 8.6]{Sagave-S_diagram} by Corollary~\ref{cor:pproduct-monoid} and~\cite[Lemma 8.1]{Sagave-S_diagram} by Lemma~\ref{lem:quotient-by-free}, the argument given in the proof of~\cite[Lemma 9.5]{Sagave-S_diagram} applies verbatim with one exception: we need to show that for any $n \geq 0$ and any $\bld{m}$ in $\cI$, the group $\Sigma_n$ acts freely on $V^{\boxtimes n}(\bld{m})$. Using that the generating cofibrations have cofibrant domains and codomains, we may assume that this also holds for the generating acyclic cofibrations~\cite[Corollaries 2.7 and 2.8]{barwick_left-right}. Hence the last claim follows from Lemma~\ref{lem:free-on-pos}.
\end{proof}
We recall that a morphism of operads $\Phi \colon \cD \to \cE$ induces an adjunction \[\Phi_*\colon \ssetI[\cD]\rightleftarrows \ssetI[\cE]\colon \Phi^*.\] 

\begin{proposition}\label{prop:change-of-operad}
Let $\Phi \colon \cD \to \cE$ be a morphism of operads with $\Phi_n\colon \cD(n)\to\cE(n)$ a Joyal equivalence for each $n\geq 0$. Then $(\Phi_*,\Phi^*)$ is a Quillen equivalence with respect to the positive $\cI$-model structures. If 
$\cD$ and $\cE$ are $\Sigma$-free, then it is also a Quillen equivalence with respect to the absolute $\cI$-model structures. 
\end{proposition}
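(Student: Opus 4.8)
We intend to mimic the proof of the corresponding change-of-operad statement for the Kan $\cI$-model structure in~\cite{Sagave-S_diagram}, feeding in the Joyal inputs established above in place of the Kan ones. First I would observe that $(\Phi_*,\Phi^*)$ is a Quillen adjunction: the model structures on $\ssetI[\cD]$ and $\ssetI[\cE]$ are created by the forgetful functors to $\ssetI$, and $\Phi^*$ is the identity on underlying $\cI$-simplicial sets and commutes with these forgetful functors, so it preserves fibrations and acyclic fibrations and moreover reflects all weak equivalences. By the usual criterion, $(\Phi_*,\Phi^*)$ is then a Quillen equivalence as soon as the underlying map of the unit $\eta_X\colon X\to\Phi^*\Phi_*X$ is an $\cI$-equivalence for every cofibrant $\cD$-algebra $X$; passing to a fibrant replacement of $\Phi_*X$ changes nothing here since $\Phi^*$ preserves weak equivalences.

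The second step is to reduce this to free algebras. A cofibrant $\cD$-algebra is a retract of a relative cell complex built from the initial $\cD$-algebra by attaching cells $F^{\cD}(i)$, where $F^{\cD}$ denotes the free $\cD$-algebra functor and $i$ ranges over the generating cofibrations of the positive (resp.\ absolute, if $\cD$ is $\Sigma$-free) level model structure on $\ssetI$ --- in particular over maps between cofibrant objects. Since $\eta$ is natural, weak equivalences are closed under retracts, and Lemma~\ref{lem:properties-level-cof}(ii)--(iii) supplies the gluing lemma and the transfinite-composition statement for levelwise monomorphisms and $\cI$-equivalences, the same cell induction and operadic pushout filtration as in the proof of~\cite[Lemma~9.5]{Sagave-S_diagram} --- now with Corollary~\ref{cor:pproduct-monoid} in place of the pushout-product and monoid axioms of~\cite{Sagave-S_diagram}, and Lemma~\ref{lem:quotient-by-free} for those layers of the filtration that are quotients by free symmetric-group actions --- reduces the claim to the free-algebra case: for every positive (resp.\ absolute) cofibrant $U$ in $\ssetI$ and every $n\geq 0$, the map $\cD(n)\times_{\Sigma_n}U^{\boxtimes n}\to\cE(n)\times_{\Sigma_n}U^{\boxtimes n}$ induced by $\Phi_n$ is an $\cI$-equivalence. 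Indeed, $\Phi_*$ carries $F^{\cD}$ to the free $\cE$-algebra functor $F^{\cE}$, so the underlying map of $\eta_{F^{\cD}(U)}$ is the coproduct over $n$ of these maps, and coproducts of $\cI$-equivalences are $\cI$-equivalences.

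For the free-algebra comparison I would argue as follows. The $\Sigma_n$-equivariant map $\Phi_n\times\id\colon\cD(n)\times U^{\boxtimes n}\to\cE(n)\times U^{\boxtimes n}$ is a level equivalence, because on each object $\bld{m}$ of $\cI$ it is $\Phi_n\times\id_{U^{\boxtimes n}(\bld{m})}$, which is a Joyal equivalence: $\Phi_n$ is one, and crossing with a fixed simplicial set preserves Joyal equivalences by the pushout-product axiom~\cite[2.15 Proposition]{Dugger-S_mapping} (every simplicial set being cofibrant). Hence $\hocolim_{\cI}(\Phi_n\times\id)$ is a Joyal equivalence. In the positive case, Lemma~\ref{lem:free-on-pos} shows that $\Sigma_n$ acts freely on $U^{\boxtimes n}(\bld{m})$, hence on $\cE(n)\times U^{\boxtimes n}(\bld{m})$, for every $\bld{m}$; in the absolute case we assume $\cE$ is $\Sigma$-free, so $\Sigma_n$ acts freely on $\cE(n)$ and hence again on $\cE(n)\times U^{\boxtimes n}(\bld{m})$. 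In either case, applying Lemma~\ref{lem:quotient-by-free} with $G=\Sigma_n$, with $f=\Phi_n\times\id$, and with $E=\cE(n)\times U^{\boxtimes n}$ together with the identity map $Y\to E$, we conclude that $f/\Sigma_n\colon\cD(n)\times_{\Sigma_n}U^{\boxtimes n}\to\cE(n)\times_{\Sigma_n}U^{\boxtimes n}$ is an $\cI$-equivalence, as needed.

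I expect the main obstacle to be the reduction step rather than the free-algebra computation: one has to run the cell induction and the operadic pushout filtration without ever using the Bousfield--Kan formula, checking that every appeal to a gluing lemma or to an interchange of homotopy colimits in the proof of~\cite[Lemma~9.5]{Sagave-S_diagram} is covered by Lemma~\ref{lem:properties-level-cof}, and that every quotient by a free symmetric-group action occurring in a filtration layer is handled by Lemma~\ref{lem:quotient-by-free}. Once that bookkeeping is in place, the computation in the previous paragraph is the only genuinely new input, and the rest of the argument is formally identical to~\cite{Sagave-S_diagram}.
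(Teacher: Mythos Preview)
Your approach is essentially the paper's: rerun the change-of-operad argument from~\cite{Sagave-S_diagram} with the Joyal-specific inputs from Section~2 substituted in. Two small corrections are in order. First, the relevant key ingredient in~\cite{Sagave-S_diagram} is Lemma~9.13 there, not Lemma~9.5; the latter is the pushout lemma used to establish the lifted model structures, while the former is the actual cell-induction comparison of $\cD$- and $\cE$-algebras that you are sketching. Second, your list of replacements omits Proposition~\ref{prop:cofibrant-boxtimes-preserves} (standing in for~\cite[Proposition~8.2]{Sagave-S_diagram}): in the inductive step, the filtration layers involve $\boxtimes$-products of cofibrant pieces coming from the attached cell with the underlying $\cI$-simplicial set of the previous stage $X_k$ (respectively of $\Phi_*X_k$), and the latter need not be cofibrant in $\ssetI$; propagating the inductive hypothesis that $X_k\to\Phi^*\Phi_*X_k$ is an $\cI$-equivalence through these products therefore requires exactly that proposition, not merely the monoid axiom. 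With these two amendments your proof matches the paper's.
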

\begin{proof}
Again the proof of the analogous statement about the Kan $\cI$-model structure~\cite[Proposition 9.12]{Sagave-S_diagram} applies almost verbatim: in the key
ingredient~\cite[Lemma 9.13]{Sagave-S_diagram}, Lemma~\ref{lem:properties-level-cof} replaces those parts of~\cite[Proposition 7.1]{Sagave-S_diagram} that involve weak equivalences, Corollary~\ref{cor:pproduct-monoid} replaces~\cite[Proposition 8.4]{Sagave-S_diagram}, Proposition~\ref{prop:cofibrant-boxtimes-preserves} replaces~\cite[Proposition 8.2]{Sagave-S_diagram}, and Lemma~\ref{lem:quotient-by-free} replaces~\cite[Lemma 8.1]{Sagave-S_diagram}.
\end{proof}

\begin{proof}[Proof of Theorem~\ref{thm:lifted-model-intro}]
  Part (i) follows from Theorem~\ref{thm:existence-operad-model-str}
  applied to the commutativity operad $\cC$ with $\cC(n)=*$ for every
  $n$. Left properness follows by the arguments from~\cite[Lemma 11.8
  and Proposition 11.9]{Sagave-S_diagram}, where again the results
  from Section~2 replace the corresponding statements
  in~\cite{Sagave-S_diagram}.

  If $\cD$ is an $E_{\infty}$ operad, then there is a canonical
  morphism $\Phi\colon \cD \to \cC$, and we obtain a chain of Quillen
  adjunctions
\[
\xymatrix{\ssetI_{\mathrm{pos}}[\cC] \ar@<-.15pc>[r]_{\Phi^*}& \ssetI_{\mathrm{pos}}[\cD] \ar@<.15pc>[r]^{\mathrm{id}}\ar@<-.15pc>[l]_{\Phi_*} & \ssetI_{\mathrm{abs}}[\cD] \ar@<.15pc>[l]^{\mathrm{id}}\ar@<.15pc>[r]^-{\colim_{\cI}} &  \sset[\cD] \ar@<.15pc>[l]^-{\mathrm{const}_{\cI}}}
\]
The first adjunction is a Quillen equivalence by Proposition~\ref{prop:change-of-operad}. The last two adjunctions are Quillen equivalences by Corollary~\ref{cor:colim-Q-equiv} and the fact that cofibrant objects in $ \ssetI_{\mathrm{abs}}[\cD]$ are cofibrant in $\ssetI_{\mathrm{abs}}$ if $\cD$ is $\Sigma$-free~\cite[Corollary 12.3]{Sagave-S_diagram}. 
\end{proof}
\begin{bibdiv}
\begin{biblist}

\bib{barwick_left-right}{article}{
      author={Barwick, Clark},
       title={On left and right model categories and left and right {B}ousfield
  localizations},
        date={2010},
        ISSN={1532-0073},
     journal={Homology, Homotopy Appl.},
      volume={12},
      number={2},
       pages={245\ndash 320},
         url={http://projecteuclid.org/euclid.hha/1296223884},
}

\bib{Dugger-S_mapping}{article}{
      author={Dugger, Daniel},
      author={Spivak, David~I.},
       title={Mapping spaces in quasi-categories},
        date={2011},
        ISSN={1472-2747},
     journal={Algebr. Geom. Topol.},
      volume={11},
      number={1},
       pages={263\ndash 325},
         url={http://dx.doi.org/10.2140/agt.2011.11.263},
}

\bib{Dugger_replacing}{article}{
      author={Dugger, Daniel},
       title={Replacing model categories with simplicial ones},
        date={2001},
        ISSN={0002-9947},
     journal={Trans. Amer. Math. Soc.},
      volume={353},
      number={12},
       pages={5003\ndash 5027 (electronic)},
         url={http://dx.doi.org/10.1090/S0002-9947-01-02661-7},
}

\bib{Hirschhorn_model}{book}{
      author={Hirschhorn, Philip~S.},
       title={Model categories and their localizations},
      series={Mathematical Surveys and Monographs},
   publisher={American Mathematical Society},
     address={Providence, RI},
        date={2003},
      volume={99},
        ISBN={0-8218-3279-4},
}

\bib{Lurie_HTT}{book}{
      author={Lurie, Jacob},
       title={Higher topos theory},
      series={Annals of Mathematics Studies},
   publisher={Princeton University Press, Princeton, NJ},
        date={2009},
      volume={170},
        ISBN={978-0-691-14049-0; 0-691-14049-9},
}

\bib{Lurie_cobordism}{incollection}{
      author={Lurie, Jacob},
       title={On the classification of topological field theories},
        date={2009},
   booktitle={Current developments in mathematics, 2008},
   publisher={Int. Press, Somerville, MA},
       pages={129\ndash 280},
}

\bib{Lurie_HA}{misc}{
      author={Lurie, Jacob},
       title={Higher algebra},
        note={Preprint, available at
  \url{http://www.math.harvard.edu/~lurie/}},
}

\bib{Schlichtkrull_units}{article}{
      author={Schlichtkrull, Christian},
       title={Units of ring spectra and their traces in algebraic
  {$K$}-theory},
        date={2004},
        ISSN={1465-3060},
     journal={Geom. Topol.},
      volume={8},
       pages={645\ndash 673 (electronic)},
}

\bib{Sagave-S_diagram}{article}{
      author={Sagave, Steffen},
      author={Schlichtkrull, Christian},
       title={Diagram spaces and symmetric spectra},
        date={2012},
        ISSN={0001-8708},
     journal={Adv. Math.},
      volume={231},
      number={3-4},
       pages={2116\ndash 2193},
         url={http://dx.doi.org/10.1016/j.aim.2012.07.013},
}

\bib{Schlichtkrull-S_braided}{misc}{
      author={Schlichtkrull, Christian},
      author={Solberg, Mirjam},
       title={Braided injections and double loop spaces},
        note={\arxivlink{1403.1101}},
}

\end{biblist}
\end{bibdiv}


\end{document}